\documentclass[reqno,11pt,a4paper]{amsart}
\usepackage{amsmath,amssymb,latexsym}

\usepackage{enumerate}
\usepackage[all]{xy}
\usepackage{fancyhdr}
\pagestyle{fancy}
\usepackage{color}

\linespread{1.02}
\textwidth = 380pt

\usepackage{graphics}
\usepackage{hyperref}

\newcommand{\Z}{\mathbb Z}
\newcommand{\Q}{\mbox{$\mathbb Q$}}

\newcommand{\T}{\mathcal T}

\newcommand{\N}{\mathbb{N}}

\newcommand{\F}{\mathcal{F}}

\newcommand{\cyc}{\mathrm{cyc}}

\newcommand{\La}{\Lambda}

\newcommand{\lra}{\longrightarrow}

\usepackage[OT2,OT1]{fontenc}
\newcommand\cyr{%
\renewcommand\rmdefault{wncyr}
\renewcommand\sfdefault{wncyss}
\renewcommand\encodingdefault{OT2}
\normalfont\selectfont}

\DeclareTextFontCommand{\textcyr}{\cyr}

\newtheorem{theorem}{Theorem}[section]
\newtheorem{proposition}[theorem]{Proposition}
\newtheorem{lemma}[theorem]{Lemma}

\newtheorem{corollary}[theorem]{Corollary}
\newtheorem{definition}[theorem]{Definition}

\newtheorem*{theorem*}{Theorem}

\theoremstyle{definition}
\newtheorem*{example}{Example}

\theoremstyle{definition}
\newtheorem{remark}[theorem]{Remark}

\theoremstyle{definition}
\newtheorem*{dfn}{Definition}
\theoremstyle{plain}
\newtheorem*{namedthm}{\namedthmname}
\newcounter{namedthm}

\makeatletter
\newenvironment{named}[1]
{\def\namedthmname{#1}%
	\refstepcounter{namedthm}%
	\namedthm\def\@currentlabel{#1}}
{\endnamedthm}
\makeatother

\hypersetup{
   colorlinks=true, 
    linktoc=all,     
    linkcolor=blue,
    citecolor=blue,}

\usepackage[capitalise,nameinlink]{cleveref}

\newcommand{\Keywords}[1]{\par\noindent
{\small{Keywords and phrases}: #1}}

\newcommand{\AMS}[1]{\par\noindent
{\small{AMS Subject Classification}: #1}}

\author{Sohan Ghosh, Somnath Jha, Sudhanshu Shekhar}
\address{Department of Mathematics and Statistics, IIT Kanpur, Kanpur 208016, India}
\email{gsohan@iitk.ac.in, jhasom@iitk.ac.in, sudhansh@iitk.ac.in }

\setlength{\headsep}{25pt}

\lhead{}
\lfoot{}
\chead{\scriptsize{\uppercase{}}}
\cfoot{\thepage}
\rhead{}
\rfoot{}

\begin{document}

\title{Twisting lemma for $\Lambda$-adic modules}

\begin{abstract}
A classical twisting lemma says that given a finitely generated torsion module $M$ over the Iwasawa algebra $\Z_p[[\Gamma ]]$ with $\Gamma \cong \Z_p, \ $there exists  a continuous character $\theta: \Gamma \rightarrow \Z_p^\times$ such that,  the $ \Gamma^{p^n}$-Euler characteristic of  the twist $M(\theta)$ is finite for every $n$. This twisting lemma has been generalized for the  Iwasawa algebra of a general compact $p$-adic Lie group $G$.  In this article, we consider a further generalization of the twisting lemma to $\T[[G]]$ modules, where $G$ is a compact $p$-adic Lie group and $\T$ is a finite extension of $\Z_p[[X]]$. Such modules naturally occur in  Hida theory. We also indicate arithmetic applications by considering the  `big' Selmer  (respectively fine Selmer) group   of a $\La$-adic form over a $p$-adic Lie extension.
 
\end{abstract}

\maketitle

\let\thefootnote\relax\footnotetext{

\AMS{11R23, 14F33, 11G05}
\Keywords {\begin{footnotesize}{Iwasawa theory,  Selmer groups, $\La$-adic form, $G$-Euler characteristic.}\end{footnotesize}}
}
\section*{Introduction}
In this article, we discuss some topics in non-commutative Iwasawa theory for modules over $\T[[G]]$; the completed group ring of a compact $p$-adic Lie group $G$ with coefficients in the ring $\T$, where $\T$ is a finite extension of $\Z_p[[X]]$.

We fix an odd prime $p$ throughout. Let $B$ be any commutative, complete, noetherian local domain of characteristic $0$ with finite residue field of characteristic $p$. For a  profinite group $\mathcal{G}$, recall the Iwasawa algebra of $\mathcal G$ over $B$ is defined as $B[[\mathcal{G}]]:=\underset{U}\varprojlim \ B[\mathcal{G}/U],
$
where $U$ varies over open normal subgroups of $\mathcal{G}$ and the inverse limit is taken with respect to the canonical projection maps.
 
 Let $G$ be a compact $p$-adic Lie group with a closed normal subgroup $H$ such that $\Gamma:=G/H\cong \Z_p$. {{}We will denote by $O$, the ring of integers of a finite extension of $\Q_p$.} For a left $O[[G]]$ module $M$ and a continuous character $\theta:\Gamma\rightarrow \Z_p^{\times}$,  denote by $M(\theta)$ the $O[[G]]$-module $M\otimes_{\Z_p} \Z_p(\theta)$ with diagonal $G$-action. We will assume throughout that $G$ has no $p$-torsion element. Recall that for a compact $p$-adic Lie group $G$ with no element of order $p$, the Iwasawa algebra $O[[G]]$ has finite global dimension \cite{B,la}. 

\begin{dfn}
	Let $G$ be a compact $p$-adic Lie group without any element of order $p$. For a finitely generated $O[[G]]$-module $M$, we say that the $G$-Euler characteristic of $M$ exists if the homology groups $H_i(G,M)$ are all finite and we define it as 
	\begin{equation*}
	\chi(G,M):=\underset{i\geq 0}\prod (\# H_i(G,M))^{(-1)^i}  \ .
	\end{equation*}	
\end{dfn}	
Given {{}an} $O[[G]]$-module $M$, $\chi(G,M)$ is an  invariant attached to $M$. A natural example of $O[[G]]$ modules in arithmetic comes from Selmer group attached to a motive over $p$-adic Lie extension of a number field, with $G$ being the corresponding Galois group. {The }Euler characteristic of {{} a } Selmer group naturally carries arithmetical information.  For any number field $K$, let $K_\cyc$ denote the cyclotomic $\Z_p$ extension and set $\Gamma=\text{Gal}(K_\cyc/K)$. Let $E/\Q$ be an elliptic curve with  good, ordinary reduction at $p $ and let $X(E/\Q_\cyc)$ denote the dual $p^\infty$-Selmer group  of $E$ over $\Q_\cyc$. Then it is known that, under suitable condition, the $p$-adic valuation of
$
  \chi(\Gamma,X(E/\Q_{\cyc})) $ is related to the $p$-adic valuation of the special value $\frac{L_E(1)}{\Omega_E}  (\# \tilde{E}(\mathbb F_p)(p))^2$ of the complex $L$-function of $E$ over $\Q$.  (see \cite[Introduction]{ss})
  
On the other hand, consider a $p$-adic Lie extension $K\subset K_\cyc \subset K_\infty$ of a number field $K$ which is unramified  outside a finite set of  places of $K$ and set $G = \text{Gal}(K_\infty/K)$ and $H = \text{Gal}(K_\infty/K_\cyc)$. Then $\Gamma := G/H \cong \Z_p$. For such a general $p$-adic Lie extension $K_\infty/K$,  the twisted Euler characteristic of $\Z_p[[G]]$ modules $M$, such that $M/{M(p)}$ is finitely generated over $\Z_p[[H]]$, has been discussed in \cite{cfksv}. Also for an elliptic curve $E/\Q$, the conjectural relation between $\chi(G, X(E/K_\infty)(\theta))$ and  twisted $L$-values are studied  (cf. \cite[Theorem 3.6]{cfksv}, \cite{ss}). 

For $G= \Gamma$, following classical twisting  lemma is well known  in Iwasawa theory and can be found in the works of Greenberg \cite{gr1} and Perrin-Riou \cite{pr}: For any finitely generated torsion $O[[\Gamma]]$-module $M$, there exists 
a continuous character $\theta : \Gamma \rightarrow \Z^\times_p$ such that the largest $\Gamma^{p^n}$-coinvariant quotient 
$H_0(\Gamma^{p^n}, M(\theta))=(M(\theta))_{\Gamma^{p^n}}$  is finite for every $n \in \N$. Note that $H_0(\Gamma^{p^n}, M(\theta))$ is finite if and only if 
$\chi(\Gamma^{p^n}, M(\theta))$  is finite.

Let $R$ be a ring and $M$ be a left $R$ module. Define, $M(p):=\underset{r\geq 1}\bigcup M[p^r]$,  where $M[p^r]$ is the set of $p^r$ torsion points of $M$.  Unless stated otherwise, when we consider a module $M$ over a ring $R$, we mean $M$ is a left module over $R$.

The twisting Lemma in the non-commutative setting was established  in \cite{joz}:
\begin{named}{Theorem(JOZ)}\label{thm2}
		Let $G$ be a compact $p$-adic Lie group and $H$ be a closed normal subgroup of $G$ such that $ \Gamma:=G/H\cong\Z_p$. Let $M$ be an $O[[G]]$ module which is finitely generated over $O[[H]]$. Then, there {exists} a continuous character $\theta:\Gamma\rightarrow \Z_p^{\times}$ such that $M(\theta)_U=H_0(U,M(\theta))$ is finite for every open normal subgroup $U$ of $G$.
\end{named}

 Note that for a general compact $p$-adic Lie group $G$ and {{}an} $O[[G]]$ module $M$, $H_0(G,M)$ is finite does not necessarily imply $\chi(G,M) $ exists (i.e. finite) \cite[see Remark 1.5]{ss}. In \cite{ss},  \ref{thm2} was extended to the following result on the twisted Euler characteristic.
\begin{named}{Theorem(JS)}\label{thm1}
	Let $G$ be a compact $p$-adic Lie group without any element of order $p$ and $H$ be a closed normal subgroup of {{} $G$} such that $\Gamma:=G/H\cong \Z_p$. Let $M$ be a finitely generated $O[[G]]$ module such that $M/M(p)$ is a finitely generated $O[[H]]$ module. Then there {{} exists} a continuous character $\theta:\Gamma\rightarrow \Z_p^{\times}$ such that $\chi(U,M(\theta))$ exists for every open normal subgroup $U$ of $G$.
\end{named}
Moreover,  it is shown that for a given $M$, there is a countable subset $S_M$ of all continuous characters    from $\Gamma$ to $\Z_p^{\times}$, such that  for any choice of  a continuous $\theta: \Gamma$ to $\Z_p^{\times}$ outside $S_M$,  \ref{thm2} and \ref{thm1} hold.

Congruence of modular forms is an important topic  in number theory and it naturally leads to the study of modules over {{}`}two variable{{}'} Iwasawa algebra { where the}  coefficient ring  {{} of the Iwasawa algebra} is a certain universal ordinary deformation ring (cf. \cite{hi}, \cite{w}). These {universal ordinary deformation} rings {{} are typically}  finite extensions of $\Z_p[[X]]$. Thus it is natural to ask for a generalization of twisting Lemma for modules over $\T[[G]]$, {where $G$ is a compact $p$-adic Lie group} and $\T$ is finite over $\Z_p[[X]]$. The main result of the article is the following:

\begin{theorem}\label{thm-main}
Let $G$ be a compact $p$-adic Lie group without any element of order $p$ and $H$ be a closed normal subgroup of $G$ with $\Gamma:=G/H\cong \Z_p$. Let $\T$ be  a commutative, complete local domain which is  finite over $\Z_p[[X]]$. Let $M$ be a finitely generated $\T[[G]]$ module such that $M/M(p)$ is   finitely generated over  $\T[[H]]$. 

Then {{} there exists} a continuous character $\theta:\Gamma \rightarrow \Z_p^{\times}$ and a countable set  $C_{M,\theta}$  of height $1$ prime ideals of $\T$ such that if we choose  any height $1$ prime ideal $Q \not\in C_{M,\theta}$, then 
$\chi\big(U,\frac{M}{QM}(\theta)\big)$ is finite for every open normal subgroup $U$ of $G$.

\end{theorem}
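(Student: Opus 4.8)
The plan is to deduce the result, for all but countably many height-$1$ primes $Q$, from \ref{thm1} applied to the $\Z_p[[G]]$-module $M/QM$; the real content is to make the choice of $\theta$ independent of $Q$, which I would do by controlling the relevant characteristic polynomials over $\T$ and specialising them modulo $Q$. Fix a topological generator $\gamma$ of $\Gamma$, so that a continuous character $\theta\colon\Gamma\to\Z_p^\times$ is the same datum as the element $\theta(\gamma)$ of the torsion-free group $1+p\Z_p$. Since $\T$ is module-finite over the regular domain $\Z_p[[X]]$ (hence catenary) and is complete local (hence excellent), the height-$1$ primes containing $p$ form a finite set (they are the minimal primes over $p\T$), and the non-normal locus of $\T$ is a proper closed subset meeting the height-$1$ primes in a finite set; I put all of these into $C_{M,\theta}$. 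For the remaining $Q$, $\T_Q$ is a discrete valuation ring and $\T/Q$ is module-finite over $\Z_p$, so $(\T/Q)[[G]]$ and $(\T/Q)[[H]]$ are module-finite over $\Z_p[[G]]$ and $\Z_p[[H]]$. Write $M(p)=M[p^r]$ and $L=M/M(p)$ (finitely generated over $\T[[H]]$ by hypothesis). Then $M/QM$ is finitely generated over $\Z_p[[G]]$, and applying $-\otimes_\T(\T/Q)$ to $0\to M(p)\to M\to L\to 0$ produces a surjection $M/QM\surj L/QL$ whose kernel $K_Q$ is a quotient of $M(p)/QM(p)$, hence killed by $p^r$; in particular $(M/QM)/(M/QM)(p)$ is a quotient of $L/QL$, so finitely generated over $\Z_p[[H]]$, and $M/QM$ satisfies the hypotheses of \ref{thm1}. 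Any $\Z_p[[G]]$-module that is finitely generated over $\Z_p[[G]]$ and $p$-power torsion has finite $H_i(U,-)$ for every open $U$ and every $i$ (resolve $\Z_p$ by finite free $\Z_p[[U]]$-modules and note the homology is finitely generated over $\Z_p$ and $p$-power torsion); so $K_Q(\theta)$ has finite $H_i(U,-)$, and the long exact sequence of $0\to K_Q(\theta)\to \tfrac{M}{QM}(\theta)\to(L/QL)(\theta)\to 0$ reduces everything to arranging that $\chi\big(U,(L/QL)(\theta)\big)$ is finite for every open normal $U\leq G$ and every $Q\notin C_{M,\theta}$.

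Next, mimicking the proof of \ref{thm1}, I would fix an open normal $U\leq G$, put $U_H=U\cap H$, and let $n=n(U)$ be defined by: the image of $U$ in $\Gamma$ equals $\Gamma^{p^n}$, so $U/U_H\cong\Gamma^{p^n}$ with canonical generator $\bar\gamma_U$ (mapping to $\gamma^{p^n}$). Since $L$ is finitely generated over $\T[[U_H]]$, the $\T$-module $\mathcal H_{q,U}:=H_q(U_H,L)$ is finitely generated and carries the $\T$-linear endomorphism $\Phi_{q,U}$ induced by $\bar\gamma_U$; moreover, applying $-\otimes_{\Z_p[[U_H]]}L$ to a finite free $\Z_p[[U_H]]$-resolution of $\Z_p$ gives a finite complex $C_\bullet$ of finitely generated $\T$-modules with $H_q(C_\bullet)=\mathcal H_{q,U}$ and $H_q(C_\bullet\otimes_\T\T/Q)=H_q(U_H,L/QL)$. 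As $H=\ker\theta\supseteq U_H$, the module $H_b(U_H,(L/QL)(\theta))$ is $H_b(U_H,L/QL)$ with $\bar\gamma_U$ acting as $\theta(\gamma)^{p^n}\Phi_{b,U}$, and it is finitely generated over $\Z_p$; since $\mathrm{cd}_p(\Gamma^{p^n})=1$, the Hochschild--Serre spectral sequence for $U_H\unlhd U$ shows that $\chi\big(U,(L/QL)(\theta)\big)$ is finite provided that, for every $b$, the element $\theta(\gamma)^{-p^n}$ is not an eigenvalue of $\Phi_{b,U}$ on $H_b(U_H,L/QL)\otimes_{\Z_p}\overline{\mathbb Q}_p$.

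The crux is then that, generically in $Q$, these eigenvalues are the reductions modulo $Q$ of eigenvalues defined over $\T$. Let $f_{q,U}(t)\in\mathrm{Frac}(\T)[t]$ be the characteristic polynomial of $\Phi_{q,U}$ on $\mathcal H_{q,U}\otimes_\T\mathrm{Frac}(\T)$, and choose $0\neq\delta_{q,U}\in\T$ clearing its denominators. For each of the countably many pairs $(q,U)$, I enlarge $C_{M,\theta}$ by the finitely many height-$1$ primes containing $\delta_{q,U}$ and by the finitely many height-$1$ primes in the support of the $\T$-torsion submodule of $\mathcal H_{q,U}$; $C_{M,\theta}$ stays countable. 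For $Q\notin C_{M,\theta}$, base change over the discrete valuation ring $\T_Q$ (where $C_\bullet\otimes\T_Q$ is perfect and all $\mathcal H_{j,U}\otimes\T_Q$ are free) identifies $H_q(U_H,L/QL)\otimes_{\T/Q}\kappa(Q)$ with $\mathcal H_{q,U}\otimes_\T\kappa(Q)$ $\Phi_{q,U}$-equivariantly, and identifies the characteristic polynomial of $\Phi_{q,U}$ there with the reduction $\bar f_{q,U}\in\kappa(Q)[t]$ of $f_{q,U}$ modulo $Q$; hence the eigenvalues in question are exactly the roots of $\bar f_{q,U}$. Now I choose $\theta$: outside a countable set of characters one has $f_{q,U}\big(\theta(\gamma)^{-p^{n(U)}}\big)\neq 0$ in $\mathrm{Frac}(\T)$ for every $(q,U)$ — for fixed $(q,U)$, $f_{q,U}$ has only finitely many roots in $\Z_p$ and $x\mapsto x^{-p^{n(U)}}$ is injective on $1+p\Z_p$, so only finitely many $\theta$ are excluded. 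Setting $g_{q,U,\theta}:=\delta_{q,U}\,f_{q,U}\big(\theta(\gamma)^{-p^{n(U)}}\big)\in\T\setminus\{0\}$, I finally enlarge $C_{M,\theta}$ by the finitely-many-per-$(q,U)$ height-$1$ primes containing some $g_{q,U,\theta}$, and it is still countable. For $Q\notin C_{M,\theta}$ and any $(q,U)$, $g_{q,U,\theta}\notin Q$ forces $\bar f_{q,U}\big(\theta(\gamma)^{-p^{n(U)}}\big)\neq 0$, i.e. $\theta(\gamma)^{-p^{n(U)}}$ is not an eigenvalue of $\Phi_{q,U}$ on $H_q(U_H,L/QL)\otimes_{\Z_p}\overline{\mathbb Q}_p$; by the second paragraph $\chi\big(U,(L/QL)(\theta)\big)$, hence $\chi\big(U,\tfrac{M}{QM}(\theta)\big)$, is finite for every open normal $U$, which is the assertion.

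I expect the main obstacle to be the specialisation step in the third paragraph: proving that, away from a countable set of height-$1$ primes, forming $L/QL$ commutes degree by degree with taking $U_H$-homology and with passage to characteristic polynomials. This is exactly where excellence of $\T$ (to confine the non-DVR and non-free loci to finitely many height-$1$ primes in each degree) and the elementary fact that a nonzero element of the domain $\T$ lies in only finitely many height-$1$ primes are needed; once that is in hand, the remainder is a quantifier-careful rerun of the proof of \ref{thm1}.
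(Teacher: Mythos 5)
Your strategy is genuinely different from the paper's. The paper first reduces to $\T=\Z_p[[X]]$ (via \cref{hi} and ramification of only finitely many primes), kills $M(p)$ and the $\T$-torsion, and then exploits the isomorphism $\Z_p[[X]][[G]]\cong \Z_p[[G_1\times G]]$ with $G_1\cong\Z_p$: applying \ref{thm2} to the enlarged group $G_1\times G$ produces one character $\theta$ for which $(M(\theta))_U/(X)$ is finite, and the structure theorem for finitely generated $\Z_p[[X]]$-modules (\cref{le1.2}) then propagates finiteness from the single prime $(X)$ to all but countably many $Q$; the passage from $H_0$ to the full Euler characteristic is done by a resolution of $M$ by modules free over $\T[[H]]$ (\cref{le1.4}) and induction. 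You instead run the eigenvalue analysis underlying \ref{thm1} directly over $\T$: you form $H_q(U\cap H,L)$ as a finitely generated $\T$-module with its $\Gamma^{p^{n(U)}}$-action, take characteristic polynomials over $\mathrm{Frac}(\T)$, choose $\theta$ avoiding their $\Z_p$-roots, and control the specialisation of the eigenvalue data modulo $Q$. This avoids both the reduction to $\Z_p[[X]]$ and the $G_1\times G$ trick, and it has the merit of exhibiting the bad locus explicitly as the zero set of the elements $g_{q,U,\theta}$; the price is that all the work is concentrated in the base-change step, which the paper's route never has to confront.

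That base-change step is where your write-up has a concrete hole. You assert that $C_\bullet\otimes_\T\T_Q$ is ``perfect'' and base-changes cleanly; but $C_j\cong L^{r_j}$ is finitely generated over $\T[[U\cap H]]$, not over $\T$, so perfectness over $\T_Q$ is false, and what the universal coefficient argument actually needs is that each $C_j\otimes_\T\T_Q$ be flat (equivalently torsion-free) over the DVR $\T_Q$ --- together with freeness of $\mathcal H_{q-1,U}\otimes\T_Q$ to kill the $\mathrm{Tor}_1$ term. Your list of excluded primes covers the homology but not the terms of the complex: you never exclude the height-one primes in the $\T$-support of the $\T$-torsion submodule of $L$ itself. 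This is fixable by exactly the paper's device (\cref{ttorsion}/\cref{col1}): $L_{\T\text{-tors}}$ is a $\T[[H]]$-submodule of the finitely generated module $L$ over the noetherian ring $\T[[H]]$, hence is killed by a single nonzero $r\in\T$ and so is supported on finitely many height-one primes, which you add to $C_{M,\theta}$ (or you reduce at the outset to $L$ being $\T$-torsion-free, at the cost of the same finite set of primes). With that patch, and with the routine verification that the $\bar\gamma_U$-action can be realised as a chain map on $C_\bullet$ so that the identification $H_q(U\cap H,L/QL)\otimes_{\T/Q}\kappa(Q)\cong\mathcal H_{q,U}\otimes_\T\kappa(Q)$ is $\Phi_{q,U}$-equivariant, your argument closes.
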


\begin{definition}\label{sandc}
Define  $S:=\{ \theta| \theta \text{ is a continuous character from }\Gamma\ { to }\ \Z_p^{\times} \}$ and set $C= C^\T:=\{Q: Q \text{ is a height 1 prime ideal of }\T, Q \nmid (p) \}$.
\end{definition}

\begin{remark}
In fact, in the proof of \cref{thm-main}, we will establish a slightly stronger result by showing  there exists a countable  subset $S_M$ of $S$ such that for any choice of  $\theta \in S\setminus S_M$, \cref{thm-main} holds.
\end{remark}
\begin{remark}\label{cfksv}
In   Theorem \ref{thm-main}, we consider finitely generated ${{}\T}[[G]]$ modules $M$ such that $M/M(p)$ is finitely generated over ${{}\T}[[H]]$. This comes from the formulation of non-commutative GL$_2$ Iwasawa main conjecture in \cite{cfksv}, where it is suggested, many arithmetic modules, including the dual Selmer group of an elliptic curve over a $p$-adic Lie extension would satisfy this condition.
\end{remark}

 \begin{remark}\label{rem1.6}
    	Let us keep the setting and hypotheses of Theorem \ref{thm-main}. Also for simplicity, take $\mathcal T = O[[X]]$. Then, from Theorem \ref{thm-main}, one may naturally ask {{}the following questions}:

{{}Question} 1: {{} Is it possible that} for {all but finitely many} height 1 prime ideals in $O[[X]]$, $\big(\frac{M}{QM}\big)_U=H_0\big(U,\frac{M}{QM}\big)$ is finite, for every open normal subgroup $U$ of $G$ {{}?}
    	
\medskip
    	
    	{{}Question} 2: {{} Does} there exist a continuous character $\theta:\Gamma\rightarrow \Z_p^{\times}$ and a {finite} set $C'$ of $C$ such that if we choose and fix any $Q \in C\setminus C'$ then $\left(\frac{M}{QM}(\theta) \right)_U$ is finite for every open normal subgroup $U$ of $G$ {{}?} 	
    	
    	\medskip
	
    	{{}Question} 3: {{} Does} there exist a {countable} subset  $S'$ of $S$ and a \underline{countable} subset $C'$ of $C$ such that  for any choice of $\theta\in S\setminus S'$ and for any choice of a height 1 prime $Q \in C \setminus C'$,  $\big(\frac{M}{QM}(\theta)\big)_U$ is finite for  every open normal subgroup $U$ of $G$ {{}?}	  
    	
    	\medskip
	
    	{{}Question} 4: {{} Does} there exist a {countable} set $S'\times C' \subset S \times C$,  such that for any choice of $(\theta, Q) \not\in S'\times C'$,  $\big(\frac{M}{QM}(\theta)\big)_U$ is finite, for every open normal subgroup $U$ of $G$ {{}?}
	\end{remark}
  {{}
   
 Let $S'$ and $C'$ be countable subsets of $S$ and $C$ respectively. We observe  that $(S\setminus S')\times(C\setminus C')\subset (S\times C)\setminus(S'\times C')$. From this it follows  that a negative answer to question 3 gives a negative answer to question 4. 
 
For questions 1 and 2,  we will find  $O$, $G$, $H$, an $O[[X]][[G]]$ module $M$ and  for every $\theta\in S$, a countably infinite subset $C'$ of $C$ such that for and every $Q \in C'$,  $\big(\frac{M}{QM}(\theta)\big)_{U_0}$ is infinite for some open normal subgroup $U_0$ of $G$. This will provide  a negative answer to both question 1 and question 2.  Further, we will show the same module $M$ will give a negative answer to question 3 (and hence question 4).

 }
 
 \medskip
   
    \begin{example} \label{ex1}
    Let $G=\Gamma =< \gamma >$, $H=\{1\}$ and $O=\Z_p$.
	Let $M=\dfrac{\mathbb{Z}_p[[X]][[T]]}{(X-T)} \ ,$ where $1+T$  corresponds to $\gamma$. 
	Let $Q \neq (p)$  be a height 1 prime ideal of $\Z_p[[X]]$.  Write  $Q=(g(X))$, where  $(g(X))$ is an irreducible Weierstrass polynomial. 
	
	For a continuous character  $\theta: \Gamma \lra \Z_p^{\times}$, we can write $\theta(\gamma^{-1})=1+p\lambda$ for some (fixed) $\lambda\in \Z_p$. Then
	\begin{small}{
			$M(\theta)\cong \dfrac{\mathbb{Z}_p[[X]][[T]]}{((1+p\lambda)(T+1)-(X+1))}.$}\end{small}
	Now, if $Q \neq (p)$ is  a prime ideal of height 1 in $\mathbb{Z}_p[[X]]$, then
	\begin{small}{\begin{equation*}
				\big(\frac{M}{QM}(\theta)\big)_{\Gamma^{p^n}}\cong\frac{\Z_p[[X]][[T]]}{((1+p\lambda)(T+1)-(X+1),(T+1)^{p^n}-1,Q )}.
	\end{equation*}}\end{small}
	is finite if and only if   $(Q,(X+1)^{p^n}-(1+p\lambda)^{p^n})$ has height 2. Let
	\begin{equation*}
		C_{p^n}^{\theta}: =\{Q\text{ is a height 1 prime in}\ \Z_p[[X]]\	:\ Q\neq(p) \text { and} \ \big(\frac{M}{QM}(\theta)\big)_{\Gamma^{p^n}} \text{ is infinite}\}.
	\end{equation*}
	
	Then $C_{p^n}^{\theta}=\{(g(X)) \ | \ g(X) \ \text{is an irreducible divisor of}\ (X+1)^{p^n}-(1+p\lambda)^{p^n} \}.$  For $m>n$, by Weierstrass preparation theorem, there exists an irreducible factor $\ell^\theta(X)$ of $\frac{(X+1)^{p^m}-(1+p\lambda)^{p^m} }{(X+1)^{p^n}-(1+p\lambda)^{p^n}  }$. Then, $(\ell^\theta(X))\in C_{p^m}\setminus C_{p^n}$ and hence  for $m>n$, $C_{p^n}^{\theta}\subsetneq C_{p^m}^{\theta}$ and $C'_\theta:=\bigcup\limits_{n\geq1} C_{p^n}^{\theta}$ is an infinite set. Now, for each  $Q\in C'_\theta$,  there exists  an integer $n_0$ such that $\big( \frac{M}{QM}(\theta)\big)_{\Gamma^{p^{n_0}}} $  is infinite.
	This shows that {{} the answer to question} 2  in remark \ref{rem1.6} is {{} negative}.

	 {{} Now} recall the sets $S$  and $ C$ defined in definition \ref{sandc}.  If possible, assume that {{} the answer to question} 3 in Remark \ref{rem1.6} is true.  Then, there exists a countable subset $S'$ of $S$  and $C'$ of $C$ such that for $\theta \in S\setminus S'$ and $Q\in C\setminus C'$, $H_0(\Gamma^{p^n},\frac{M}{QM}(\theta))$ is finite for every $n\geq 0$.

		For every $\theta\in S$, we can write   $\theta(\gamma^{-1})=1+p\lambda_\theta$ for some (fixed) $\lambda=\lambda_\theta \in \Z_p$. Then $\big(\frac{M}{QM}(\theta)\big)_{\Gamma^{p^n}} \cong\frac{\Z_p[[X]]}{((X+1)^{p^n}-(1+p\lambda)^{p^n}, Q)}$. Observe that if $Q= (X-p\lambda)$ then $\big(\frac{M}{QM}(\theta)\big)_{\Gamma^{p^n}}$ is infinite for every $n \in \N$.  
		
		Now let us choose a  $\theta\in S\setminus S'$. Then by our assumption in Statement 3, $(\frac{M}{QM}(\theta))_{\Gamma^{p^n}}$ is finite for every $Q\in C\setminus C'$. Thus the height 1 prime $(X-p\lambda)$ must be in $C'$. Hence for every $\theta\in S\setminus S'$, we get an element $Q_{\theta}=(X+1-\theta(\gamma))\in C'$. Note  for $\theta_1, \theta_2\in S\setminus S'$ with $\theta_1\neq \theta_2$, $Q_{\theta_1}\neq Q_{\theta_2}$. Since $S\setminus S'$ is uncountable, the set $C'$ is also uncountable. This is a contradiction and hence question 3 has a negative answer as well.
 	   \end{example}

      \medskip
      
  However, the following variant of the Theorem \ref{thm-main} holds true.
\begin{proposition}\label{th1.22}
 	 Let $G, H, \T$ and $M$  be as in Theorem \ref{thm-main}. Let $C'$ be a  countable subset of $C$. Then there exists a countable set $S_M $ of $S$, such that for any choice of $\theta \in S\setminus S_M$ and for any choice of $Q \in C'$, $\chi(U,\frac{M}{QM}(\theta))$ exists, for every open normal subgroup $U$ of $G$.
\end{proposition}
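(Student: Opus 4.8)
The plan is to handle the primes of $C'$ one at a time: for each $Q\in C'$ I would reduce $M$ modulo $Q$, apply the twisting lemma with finite Euler characteristic, \ref{thm1}, to the resulting module over the coefficient ring $\T/Q$, thereby attaching to $Q$ a countable set of exceptional characters, and then take the union of these sets over the countable index set $C'$.

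Concretely, fix $Q\in C'$ and put $N_Q:=M/QM$. As $\T$ is central in $\T[[G]]$ one has $\T[[G]]/Q\,\T[[G]]\cong(\T/Q)[[G]]$, so $N_Q$ is a finitely generated $(\T/Q)[[G]]$-module, and $\T/Q$ is a complete Noetherian local domain of characteristic $0$ with finite residue field of characteristic $p$; it is moreover finite over $\Z_p$, since $Q\nmid(p)$ forces $Q\cap\Z_p[[X]]$ to be a height one prime of $\Z_p[[X]]$ different from $(p)$, hence generated by a distinguished polynomial. I would first check that $N_Q/N_Q(p)$ is finitely generated over $(\T/Q)[[H]]$: writing $M':=M/M(p)$, which is finitely generated over $\T[[H]]$ by hypothesis, the surjection $M\twoheadrightarrow M'$ induces a surjection $N_Q\twoheadrightarrow M'/QM'$ whose kernel is a quotient of $M(p)$, hence $p$-power torsion and therefore contained in $N_Q(p)$; so $N_Q/N_Q(p)$ is a quotient of the finitely generated $(\T/Q)[[H]]$-module $M'/QM'$, and is itself finitely generated over $(\T/Q)[[H]]$.

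Next I would invoke \ref{thm1} for $N_Q$, in the refined form which yields a \emph{countable} exceptional set of characters. The only subtlety is that $\T/Q$ need not be integrally closed, so it is not literally of the form $O$ used in \ref{thm1}; this is exactly the issue that already arises in \cref{thm-main}. I would handle it by passing to the normalization: let $O_L$ be the ring of integers of $L:=\mathrm{Frac}(\T/Q)$ (a module-finite $\T/Q$-algebra and an ``$O$'' in the sense of the paper) and put $\widetilde N_Q:=O_L\otimes_{\T/Q}N_Q$. Because $O_L/(\T/Q)$ is finite, the kernel and cokernel of the natural map $N_Q\to\widetilde N_Q$ are finitely generated $p$-power torsion modules, and the argument of the previous paragraph applied to $\widetilde N_Q$ shows that $\widetilde N_Q/\widetilde N_Q(p)$ is finitely generated over $O_L[[H]]$; hence \ref{thm1} applies to the finitely generated $O_L[[G]]$-module $\widetilde N_Q$ and produces a countable set $S_{M,Q}\subseteq S$ such that $\chi(U,\widetilde N_Q(\theta))$ is finite for every open normal $U\le G$ whenever $\theta\in S\setminus S_{M,Q}$. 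Since any finitely generated $p$-power torsion $(\T/Q)[[G]]$-module is finitely generated and torsion over $\Z_p[[U]]$ for every such $U$, and therefore has finite $U$-homology in all degrees, splicing this finiteness into the long exact homology sequences of the two short exact sequences coming from the factorization of $N_Q(\theta)\to\widetilde N_Q(\theta)$ through its image transfers the finiteness of the $H_i(U,-)$ from $\widetilde N_Q(\theta)$ to $N_Q(\theta)$. Thus $\chi\big(U,\tfrac{M}{QM}(\theta)\big)$ is finite for every open normal $U\le G$ and every $\theta\in S\setminus S_{M,Q}$.

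Finally I would set $S_M:=\bigcup_{Q\in C'}S_{M,Q}$; this is countable, being a countable union of countable sets. For $\theta\in S\setminus S_M$ we then have $\theta\notin S_{M,Q}$ for every $Q\in C'$, so $\chi\big(U,\tfrac{M}{QM}(\theta)\big)$ is finite for every open normal $U\le G$ and every $Q\in C'$, which is exactly the assertion. The step I expect to be the main obstacle is the middle one — making \ref{thm1} genuinely available over the possibly non-integrally-closed ring $\T/Q$ and keeping careful track of the $p$-power torsion introduced by reduction modulo $Q$ — whereas, once that is settled, treating the entire countable family $C'$ at once is just the elementary union argument above.
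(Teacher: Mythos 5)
Your argument is correct and follows essentially the same route as the paper: enumerate $C'$, reduce $M$ modulo each $Q$, apply \ref{thm1} to $M/QM$ over the coefficient ring $\T/Q$ (finite over $\Z_p$ since $Q\nmid (p)$), and take the countable union of the resulting exceptional character sets. You are in fact somewhat more careful than the paper's one-paragraph proof, which directly identifies $\T/Q_i$ with a ring of integers $O_i$ and does not spell out the passage to the normalization or the verification that $\frac{M/QM}{(M/QM)(p)}$ is finitely generated over $(\T/Q)[[H]]$; both of these routine points you handle correctly.
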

\begin{proof}
Let  us enumerate $C'= \{Q_1, Q_2, \cdots\}$.  For each $Q_i$, note that $\frac {\T}{Q_i\T}[[G]] \cong {O_i}[[G]]$ and $\frac {\T}{Q_i\T}[[H]] \cong {O_i}[[H]]$, where $O_i$ is the ring of integers of a finite extension $L^{i}$ of $\Q_p$. Then from  \ref{thm1},  there exists a countable subset $S_{Q_i, M}$ of all the continuous character from $\Gamma$ to $\Z_p^{\times}$, such that for any choice of  $\theta\notin S_{Q_i, M}$, $\chi(U,\frac{M}{Q_i M} (\theta))$ exists for every open normal subgroup $U$ of $G$. Now choose and fix any $\theta : \Gamma\rightarrow \Z_p^{\times}$ outside the countable set $S_M:=\bigcup\limits^{{{}\infty}}_{i=1} S_{Q_i,M} $. Then it follows that for any choice of $Q_i \in C'$, $\chi(U,\frac{M}{Q_i M} (\theta))$ exists for every open normal subgroup $U$ of $G$. \end{proof}	

Let $\mu_{p^\infty}$ denote the group of $p$ power roots of unity. Define a homomorphism $\nu_{k,\zeta}:\Z_p[[X]]  \rightarrow \bar{\Q}_p^{\times}$ by $\nu_{k,\zeta}(1+X)= \zeta (1+p)^k$, where $\zeta\in \mu_{p^\infty}$ and $k\in \N$.
Define
$A_{\text{arith}} (\Z_p[[X]]):=\{Q| Q=\text{ker}(\nu_{k,\zeta}) \text{ for some } k\geq1\ \text{and some}\ \zeta\in \mu_{p^\infty} \}$. Clearly, elements of $A_{\text{arith}} (\Z_p[[X]])$ are height 1 prime ideals of $\Z_p[[X]]$. Similarly, we define $A_{\text{arith}}(\T)$ as {{} the} set of those height 1 prime ideals of $\T$ which {{}divide} some height 1 prime ideal in $A_{\text{arith}} (\Z_p[[X]]) $. The elements of $A_{\text{arith}}(\T)$ are called the arithmetic primes or classical primes of $\T$ [see \cite{w}]. As an immediate corollary of Proposition \ref{th1.22}, we deduce the following:
\begin{corollary}\label{col11}
	Let us keep the hypotheses and setting of Proposition \ref{th1.22}.
	Then, {{} there  exists} a countable subset $S_M$ of $S$, such that for any choice of  $\theta\in S\setminus S_M$ and for any $Q\in  A_{\text{arith}}(\T)$, $\chi(U,\frac{M}{QM}(\theta))$ exists for every open normal subgroup $U$ of $G$.
\end{corollary}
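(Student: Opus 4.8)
The plan is to deduce the corollary from Proposition~\ref{th1.22} by feeding in $A_{\mathrm{arith}}(\T)$ as the countable set $C'$; thus the only thing that really has to be checked is that $A_{\mathrm{arith}}(\T)$ is a countable subset of $C$. Once that is established, Proposition~\ref{th1.22} applied with $C':=A_{\mathrm{arith}}(\T)$ produces the required countable $S_M\subseteq S$ immediately.

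First I would verify the inclusion $A_{\mathrm{arith}}(\T)\subseteq C$. If $\mathfrak q=\ker(\nu_{k,\zeta})$ is an arithmetic prime of $\Z_p[[X]]$, then $\nu_{k,\zeta}(p)=p\neq 0$ in $\bar{\Q}_p$, so $p\notin\mathfrak q$, i.e.\ $\mathfrak q\nmid(p)$. For any height $1$ prime $Q$ of $\T$ dividing $\mathfrak q$ one has $Q\cap\Z_p[[X]]=\mathfrak q$: here I use that $\Z_p[[X]]$ is a regular, hence integrally closed, domain and that $\T$ is a finite (so integral) extension domain, whence going-down applies and the contraction of a height $1$ prime of $\T$ is a height $1$ prime of $\Z_p[[X]]$. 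Consequently $p\notin Q$, so $Q\in C$. Next I would check countability. The assignment $(k,\zeta)\mapsto\ker(\nu_{k,\zeta})$ is a surjection from $\N\times\mu_{p^\infty}$ onto $A_{\mathrm{arith}}(\Z_p[[X]])$, and $\N\times\mu_{p^\infty}$ is countable (as $\mu_{p^\infty}=\bigcup_n\mu_{p^n}$ is a countable union of finite groups), so $A_{\mathrm{arith}}(\Z_p[[X]])$ is countable. For each such $\mathfrak q$ the primes of $\T$ lying over it form the spectrum of $\T\otimes_{\Z_p[[X]]}\kappa(\mathfrak q)$, a finite-dimensional algebra over the residue field $\kappa(\mathfrak q)$ since $\T$ is module-finite over $\Z_p[[X]]$, hence an Artinian ring with finite spectrum. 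Therefore $A_{\mathrm{arith}}(\T)$ is a countable union of finite sets, so countable.

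Finally I would apply Proposition~\ref{th1.22} with $C':=A_{\mathrm{arith}}(\T)$: it yields a countable subset $S_M$ of $S$ such that for every $\theta\in S\setminus S_M$ and every $Q\in A_{\mathrm{arith}}(\T)$ the Euler characteristic $\chi\big(U,\frac{M}{QM}(\theta)\big)$ exists for all open normal subgroups $U$ of $G$, which is exactly the assertion. I do not expect a genuine obstacle here: all of the homological and Iwasawa-theoretic content is already packaged in \ref{thm1} and Proposition~\ref{th1.22}, and the only point that requires a little care is the finiteness of the fibres of $\mathrm{Spec}\,\T\to\mathrm{Spec}\,\Z_p[[X]]$, which is precisely where the hypothesis that $\T$ is finite over $\Z_p[[X]]$ is used.
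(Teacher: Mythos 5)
Your proposal is correct and follows exactly the route the paper intends: the paper presents Corollary \ref{col11} as an immediate consequence of Proposition \ref{th1.22} applied with $C'=A_{\text{arith}}(\T)$, and your argument does precisely this. The only content you add is the (correct, and tacitly assumed in the paper) verification that $A_{\text{arith}}(\T)$ is a countable subset of $C$, using the countability of $\N\times\mu_{p^\infty}$ and the finiteness of the fibres of $\mathrm{Spec}\,\T\to\mathrm{Spec}\,\Z_p[[X]]$.
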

\subsection*{Significance in Arithmetic}\label{application0}
We set up a necessary framework that is  needed to describe  applications of our results in arithmetic. Consider  a $p$-adic Lie extension $K\subset K_\cyc\subset K_\infty$ of a number field $K$, such that $ K_\infty$  is  unramified outside finitely many places of $K$ and  $G=\text{Gal}(K_\infty/K)$ has no element of order $p$. Put  $H=\text{Gal}(K_\infty/K_\cyc)$ and note $\Gamma:=G/H=\text{Gal}(K_\infty/K)\cong \Z_p$.  For any open normal subgroup $U$ of $G$,  the fixed field $K_U:=K_\infty^U$ defines a finite extension of $K$ inside $K_\infty$.

Let $f$  be a $p$-ordinary newform of weight $\geq 2$. We denote by  $L_f$, a lattice of $V_f$, the $p$-adic Galois representation associated to $f$ and $O_f$ will denote the ring of integers of the $p$-adic field defined by the Fourier coefficients of $f$. 

Let   $ \T=\T_\mathcal F$ be the quotient of the universal
ordinary Hecke algebra   that
corresponds to an ordinary $\Lambda$-adic newform $\mathcal F$  (see
\cite{hi}). 
The algebra $\T_\F $  is a local domain  and is finite flat over $\Z_p[[X]]$. By a
celebrated result of Hida (\cite{hi}, \cite{w}), there  exists a
`large' continuous irreducible representation 
$\rho_{\F} : {G}_{\Q} \rightarrow  \mathrm{Aut}_{\T_\F}(\mathcal L_{\F}),$
where $\mathcal L_{\F}$ is a finitely generated, torsion-free module of generic rank 2 over 
$\T_\F$.  We assume that the residual representation $\bar{\rho}_\F$ associated to ${\rho}_\F$ (see \cite{hi}) is absolutely irreducible. For each $Q \in A_{\text{arith}}(\T_\F),$ {{} there  exists}   a  $p$-ordinary, $p$-stabilized newform  $f_{Q}$, such
that the quotient $\mathcal L_\F/{Q\mathcal L_F}$ is isomorphic to the (unique) lattice ${L}_{f_Q}$. To simplify the notation, we will write $L_Q= L_{f_Q}$ and $O_Q=O_{f_Q}$.

In this setting of $p$-ordinarity,  the dual Selmer group $X(L_f/K_\infty)$   \cite[Definition 1.11]{jo} (respectively $\mathcal X(\mathcal L_\F/K_\infty)$ \cite[ \S 3]{j}) of $f$ (respectively $\F$) over $K_\infty$ is defined and is a finitely generated $O_f[[G]]$ module (respectively $\T[[G]]$ module). Similarly, with  $f$ as above, for the twisted Galois representation $V_f(\theta)$, the dual Selmer group $X(L_f(\theta)/K_\infty)$  is also defined  and it is  a finitely generated $O_f[[G]]$ module. A control theorem, originally proved by Mazur, is a widely used tool in Iwasawa theory. In this setting, under suitable  condition, we deduce by a control theorem, that the kernel and cokernel of $X(L_f(\theta)/K_\infty)_U \lra  X(L_f(\theta)/K_U)$ are finite for every $U$ (see \cite[Theorem 0.1]{jo}). 

{\bf (I) Finiteness of the twisted Selmer groups over $K_U$:} Assume  that $\frac{X(L_f/K_\infty)}{X(L_f/K_\infty)(p)}$ is finitely generated over $O_f[[H]]$. Then \ref{thm2} along with a control theorem  shows {{} there  exists,} $ \theta: \Gamma \rightarrow \Z_p^\times $ such that the twisted Selmer group $X(L_f(\theta)/K_U)$  is finite for every finite extension $K_U$ inside $K_\infty$. 

Now, our Corollary \ref{col11} in particular shows that {{} there  exists,} $ \theta \in S$ such that for each $Q \in A_{\text{arith}}(\T)$, $\big(\frac{M}{QM}(\theta)\big)_U$ is finite for every $U$. Take $M =\mathcal X(\mathcal L_\F/K_\infty)$ and assume $\frac{\mathcal X(\mathcal L_\F/K_\infty)}{\mathcal X(\mathcal L_\F/K_\infty)(p)}$ is a finitely generated $\T_\F[[H]]$  module. Then by estimating the kernel and cokernel of $\frac{\mathcal X(\mathcal L_\F/K_\infty)}{Q\mathcal X(\mathcal L_\F/K_\infty)}\rightarrow  X( L_Q/K_\infty)$ \cite[\S3]{j}, we can deduce the following: For any chosen $ \theta \in S \setminus S_{\mathcal X(\mathcal L_\F/K_\infty)}$,  the  twisted Selmer groups $X(L_Q(\theta)/K_U)$ of the congruent family of cuspforms $f_Q$,  $Q \in A_{\text{arith}}(\T)$ are all simultaneously finite when $K_U$ varies over every finite extension   of $K$ inside $K_\infty$.

{\bf Algebraic functional equation:} For a given $f$, the finiteness of $X(L_f(\theta)/K_U)$ for a fixed $\theta$, where  $K_U$ varies over every  intermediate finite extension of $K$ inside $K_\infty$,  can be useful in various arithmetical situations; for example,  finiteness of $X(L_f(\theta)/K_U)$ is crucial in the proof of algebraic functional equation for $X(L_f/K_\infty)$ (see \cite[\S 5, Theorem 0.3]{jo}).  \ref{thm1} was also used in the proof in \cite{jo}. Indeed, this is a generalization of the fact that classical twisting lemma is crucial in Greenberg's \cite[Theorem 2]{gr1} and Perrin-Riou's \cite[Theorem 4.2.1]{pr} proof of algebraic functional equation. 

Similarly, the  finiteness of  $X(L_Q(\theta)/K_U)$ where   $Q$ and $U$ both vary, can be used, for example, to establish an algebraic functional equation for the `big' Selmer group  $\mathcal X(\mathcal L_\F/K_\infty)$ of $\F$ over $K_\infty$.

{\bf (II) Fine Selmer group:}  The dual fine Selmer group  $Y(L_f/K_\infty)$ (resp. $\mathcal Y(\mathcal L_\F/K_\infty)$)  of  $f$ (resp. $\F$) over $K_\infty$, is a quotient of  $X(L_f/K_\infty)$ (resp. $\mathcal X(\mathcal L_\F/K_\infty)$) \cite[\S 2]{j}) and is a finitely generated $O_f[[G]]$ (resp. $\T[[G]]$) module. It has  interesting arithmetic properties. Following  conjectures of Coates-Sujatha  \cite[Conjectures A \& B]{cs}, it is believed that $Y(L_f/K_\cyc)$ is a finitely generated $\Z_p$ module and if $\text{dim}\ G>1$, then $Y(L_f/K_\infty)$ is a pseudonull $O_f[[G]]$ module, for any $f$. For $\text{dim}\ G>1$, it is not easy to determine if $\chi(G, Y(L_f/K_\infty))$ exists, even if we assume $Y(L_f/K)$ is finite and not much is known  in the literature.  Thus it is reasonable to discuss, if at least after a twist, the Euler characteristic of the fine Selmer group exists and  also makes sense to consider the Euler characteristic of a twist of the fine Selmer groups in a congruent family.
 \begin{corollary}\label{last}
 Assume $\frac{\mathcal Y(\mathcal L_\F/K_\infty)}{\mathcal Y(\mathcal L_\F/K_\infty)(p)}$ is a finitely generated $\mathcal T_\mathcal F[[H]]$ module. Then applying Corollary \ref{col11},  we deduce that {{} there  exists} a countable subset $S_{\mathcal Y(\mathcal L_\F/K_\infty)}$ of $S$ such that for any chosen $\theta \in S\setminus S_{\mathcal Y(\mathcal L_\F/K_\infty)}$ and for every  $Q \in A_{\text{arith}}(\T_\F)$, $\chi(U, \frac{\mathcal Y(\mathcal L_\F/K_\infty)}{Q\mathcal Y(\mathcal L_\F/K_\infty)}(\theta))$ is finite for every open normal subgroup $U$ of $G$.
\end{corollary}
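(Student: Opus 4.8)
The plan is simply to verify that the coefficient ring and the module appearing in the statement satisfy the hypotheses of Theorem~\ref{thm-main}, and then to invoke Corollary~\ref{col11}. First I would collect the structural facts recorded in the framework above: by Hida's theory \cite{hi}, $\T_\F$ is a commutative, complete local domain, finite (indeed finite flat) over $\Z_p[[X]]$; the $p$-adic Lie extension $K_\infty/K$ was chosen precisely so that $G=\text{Gal}(K_\infty/K)$ is a compact $p$-adic Lie group with no element of order $p$, $H=\text{Gal}(K_\infty/K_\cyc)$ is a closed normal subgroup, and $\Gamma:=G/H\cong\Z_p$; and $M:=\mathcal Y(\mathcal L_\F/K_\infty)$ is a finitely generated $\T_\F[[G]]$-module, being a quotient of $\mathcal X(\mathcal L_\F/K_\infty)$ (\cite[\S2]{j}). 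The only additional input is the standing hypothesis of the corollary, namely that $M/M(p)$ is finitely generated over $\T_\F[[H]]$. Together these are exactly the data $(G,H,\T,M)$ required in Theorem~\ref{thm-main}, hence in Proposition~\ref{th1.22} and Corollary~\ref{col11}.

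Next I would apply Corollary~\ref{col11} with $\T=\T_\F$ and $M=\mathcal Y(\mathcal L_\F/K_\infty)$: it produces a countable subset $S_M\subseteq S$ such that, for every $\theta\in S\setminus S_M$ and every $Q\in A_{\text{arith}}(\T_\F)$, the Euler characteristic $\chi\big(U,\frac{M}{QM}(\theta)\big)$ exists for every open normal subgroup $U$ of $G$; setting $S_{\mathcal Y(\mathcal L_\F/K_\infty)}:=S_M$ is the claim. Implicit in this step is that $A_{\text{arith}}(\T_\F)$ is a countable subset of $C^{\T_\F}$, which is what legitimizes the statement of Corollary~\ref{col11}: $A_{\text{arith}}(\Z_p[[X]])$ is indexed by the countable set of pairs $(k,\zeta)$ with $k\in\N$ and $\zeta\in\mu_{p^\infty}$, each such prime lies under only finitely many primes of $\T_\F$ because $\T_\F$ is finite over $\Z_p[[X]]$, and no arithmetic prime divides $(p)$ since $\nu_{k,\zeta}(p)=p\neq 0$.

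Because the argument is no more than an application of an already proved corollary, there is no genuine obstacle. The only point deserving attention is checking that $\T_\F$ really meets every hypothesis placed on the coefficient ring $\T$ in Theorem~\ref{thm-main} --- completeness, being a local domain, and finiteness over $\Z_p[[X]]$ --- and that the hypothesis ``$G$ has no element of order $p$'' is ensured by the choice of $K_\infty/K$ in the arithmetic setup, so that Corollary~\ref{col11} can be quoted without any modification.
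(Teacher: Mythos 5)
Your proposal is correct and follows exactly the route the paper intends: the corollary is proved by checking that $(G,H,\T_\F,\mathcal Y(\mathcal L_\F/K_\infty))$ satisfy the hypotheses of Theorem~\ref{thm-main} and then invoking Corollary~\ref{col11} with $C'=A_{\text{arith}}(\T_\F)$, whose countability you rightly verify. Your added checks (countability of the arithmetic primes, $\T_\F$ finite flat over $\Z_p[[X]]$, no arithmetic prime dividing $(p)$) are exactly the implicit points the paper relies on.
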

Let $m$ be any $p$-power free integer. Now consider a particular example of a $p$-adic Lie extension (`false Tate curve' extension) given by $K =\Q(\mu_p)$ and  $K_\infty =\underset{n}{\cup}\Q(\mu_{p^\infty})( m^{1/p^n})$.  Assume that $\T_\F=O[[X]]$.  Then  the kernel and the cokernel of the natural map $\frac{\mathcal Y(\mathcal L_\F/K_\infty)}{Q\mathcal Y(\mathcal L_\F/K_\infty)}\lra  Y( L_Q/K_\infty)$ are finitely generated $O_Q$ modules \cite[Remark 11]{j}. Also assume for some $Q_0 \in A_{\text{arith}}(\T_\F), \ $ $Y( L_{Q_0}/\Q(\mu_{p^\infty}))$  is a finitely generated $O_{Q_0}$ module. Then $ \mathcal Y(\mathcal L_\F/K_\infty)$ and   $Y( L_Q/K_\infty)$ are finitely generated modules respectively over $\mathcal T_\mathcal F[[H]]$ and  $O_Q[[H]]$, for every $Q \in  A_{\text{arith}}(\T_\F)$ \cite[\S 2]{j}. From the proof of Proposition \ref{th1.22} and Corollary \ref{last}, we obtain the following result  on the family of fine Selmer groups $\{Y(L_Q/K_\infty): {Q\in A_{\text{arith}}(\T_{\mathcal F})\}}$.
\begin{theorem}\label{thmlast}
Let  $K_{\infty}/\Q(\mu_p)$ be the false Tate curve extension.  Let $\mathcal F$ be a $\Lambda$-adic newform. Assume  that  $\T_{\mathcal F}\cong O[[X]]$ is a power series ring and  {{} there  exists} $ Q_0 \in A_{\text{arith}}(\T_\F)$ such that $Y( L_{Q_0}/\Q(\mu_{p^\infty}))$  is a finitely generated $O_{Q_0}$ module. 

Then there exists a countable subset $S_{\mathcal Y(\mathcal L_\F/K_\infty)}$ of $S$, such that for any chosen $\theta \in S\setminus S_{\mathcal Y(\mathcal L_\F/K_\infty)}$ and for every $f_Q$,  $Q\in A_{\text{arith}}(\T_{\mathcal F})$,  the $U$-Euler characteristic $\chi(U, Y(L_Q/K_\infty)(\theta))$ exists, for every open normal subgroup $U$ of $G$. \qed
\end{theorem}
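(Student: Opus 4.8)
The plan is to deduce the statement from \ref{thm1} applied to a countable family of Iwasawa modules, in the same spirit as the proof of Proposition~\ref{th1.22}: for each prime in the (countable) set $A_{\text{arith}}(\mathcal T_{\mathcal F})$ one produces a countable exceptional set of twisting characters, and then takes the union. The only arithmetic input beyond the twisting lemma is the comparison, recalled just before the statement from \cite[\S 2, Remark~11]{j}, between the big fine Selmer group $\mathcal Y(\mathcal L_{\mathcal F}/K_\infty)$ and the specializations $Y(L_Q/K_\infty)$. First I would unwind the finite generation the hypothesis provides: since $\mathcal T_{\mathcal F}\cong O[[X]]$ and $Y(L_{Q_0}/\mathbb Q(\mu_{p^\infty}))$ is finitely generated over $O_{Q_0}$ for some $Q_0\in A_{\text{arith}}(\mathcal T_{\mathcal F})$, the cited results of \cite[\S 2]{j} give that $\mathcal Y(\mathcal L_{\mathcal F}/K_\infty)$ is finitely generated over $\mathcal T_{\mathcal F}[[H]]$ and that $Y(L_Q/K_\infty)$ is finitely generated over $O_Q[[H]]$ for every $Q\in A_{\text{arith}}(\mathcal T_{\mathcal F})$. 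In particular $\mathcal Y(\mathcal L_{\mathcal F}/K_\infty)\big/\mathcal Y(\mathcal L_{\mathcal F}/K_\infty)(p)$ is finitely generated over $\mathcal T_{\mathcal F}[[H]]$, so Corollary~\ref{col11} (equivalently Corollary~\ref{last}) applies with $M=\mathcal Y(\mathcal L_{\mathcal F}/K_\infty)$ and produces a countable set $S_0\subset S$ such that for every $\theta\in S\setminus S_0$ and every $Q\in A_{\text{arith}}(\mathcal T_{\mathcal F})$ the Euler characteristic $\chi\big(U,\frac{\mathcal Y(\mathcal L_{\mathcal F}/K_\infty)}{Q\,\mathcal Y(\mathcal L_{\mathcal F}/K_\infty)}(\theta)\big)$ exists for every open normal $U\subseteq G$.

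Next I would transfer this along the specialization maps, prime by prime. The set $A_{\text{arith}}(\mathcal T_{\mathcal F})$ is countable, since it has finite fibres over the countable set $\{(k,\zeta): k\in\mathbb N,\ \zeta\in\mu_{p^\infty}\}$; enumerate it as $\{Q_1,Q_2,\dots\}$. For each $i$, \cite[Remark~11]{j} gives a natural $O_{Q_i}[[G]]$-homomorphism $\pi_i:\frac{\mathcal Y(\mathcal L_{\mathcal F}/K_\infty)}{Q_i\,\mathcal Y(\mathcal L_{\mathcal F}/K_\infty)}\to Y(L_{Q_i}/K_\infty)$ whose kernel $A_i$ and cokernel $B_i$ are finitely generated over $O_{Q_i}$. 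Being finitely generated over $O_{Q_i}$, the modules $A_i$ and $B_i$ are finitely generated $O_{Q_i}[[G]]$-modules with $A_i/A_i(p)$ and $B_i/B_i(p)$ finitely generated over $O_{Q_i}[[H]]$, so \ref{thm1} applies to each of them and yields a countable set $T_i\subset S$ such that $\chi(U,A_i(\theta))$ and $\chi(U,B_i(\theta))$ exist for all $\theta\notin T_i$ and all open normal $U$. (One could instead skip the comparison entirely and apply \ref{thm1} directly to $Y(L_{Q_i}/K_\infty)$, which by the first paragraph is itself finitely generated over $O_{Q_i}[[H]]$; the route through $\pi_i$ is the one suggested by the phrase ``from the proof of Proposition~\ref{th1.22} and Corollary~\ref{last}''.)

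Finally I would assemble. Put $S_{\mathcal Y(\mathcal L_{\mathcal F}/K_\infty)}:=S_0\cup\bigcup_{i\ge 1}T_i$, a countable union of countable sets, hence countable. Fix $\theta\in S\setminus S_{\mathcal Y(\mathcal L_{\mathcal F}/K_\infty)}$, fix $Q=Q_i$, and fix an open normal $U\subseteq G$. Twisting by $\theta$ is exact, so $0\to A_i(\theta)\to\frac{\mathcal Y(\mathcal L_{\mathcal F}/K_\infty)}{Q_i\mathcal Y(\mathcal L_{\mathcal F}/K_\infty)}(\theta)\to\mathrm{im}(\pi_i)(\theta)\to 0$ is exact; since $H_*(U,-)$ of the first two terms is finite in every degree (the second by Corollary~\ref{last}, the first by \ref{thm1}), the long exact homology sequence forces $H_*(U,\mathrm{im}(\pi_i)(\theta))$ to be finite in every degree, so $\chi(U,\mathrm{im}(\pi_i)(\theta))$ exists. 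Feeding this together with $\chi(U,B_i(\theta))$ into $0\to\mathrm{im}(\pi_i)(\theta)\to Y(L_{Q_i}/K_\infty)(\theta)\to B_i(\theta)\to 0$, and using multiplicativity of the Euler characteristic in short exact sequences, gives that $\chi(U,Y(L_{Q_i}/K_\infty)(\theta))$ exists. As $i$ and $U$ were arbitrary, this is exactly the claim.

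I expect the content to be bookkeeping rather than a hard argument: the real work, namely the propagation of the finiteness of $Y(L_{Q_0}/\mathbb Q(\mu_{p^\infty}))$ to $\mathcal T_{\mathcal F}[[H]]$- and $O_Q[[H]]$-finite generation and the construction of the comparison maps with $O_Q$-finitely generated kernel and cokernel, is imported from \cite{j}, and the twisting input is \ref{thm1}. The two points that need a little care are: (i) checking that every exceptional set arising along the way ($S_0$ and each $T_i$) is countable and that $A_{\text{arith}}(\mathcal T_{\mathcal F})$ is countable, so that the final union stays countable; and (ii) invoking multiplicativity of $\chi$ only after knowing \emph{a priori} that each term of the relevant short exact sequence has finite homology in every degree — which is precisely what the previous two paragraphs arrange for the three modules involved.
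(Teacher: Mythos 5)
Your proposal is correct and follows essentially the route the paper intends: the paper states the theorem with no separate proof, deferring to the comparison results from \cite[\S 2, Remark 11]{j} quoted just before it, to Corollary \ref{last} for the big fine Selmer group, and to the enumerate-and-union bookkeeping of Proposition \ref{th1.22} applied via \ref{thm1} to the $O_Q$-finitely generated kernels and cokernels (or, as you note, directly to $Y(L_Q/K_\infty)$). Your assembly of these ingredients, including the care about finiteness of all homology groups before invoking multiplicativity of $\chi$, is exactly the intended argument.
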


\begin{remark}
A  result similar to Corollary \ref{last}  and Theorem \ref{thmlast} respectively  for the usual `big' dual Selmer group $\mathcal X(\mathcal L_\F/K_\infty)$   (see \cite[\S 3]{j}) and for the Selmer group  $X(L_Q/K_\infty)$ of $f_Q$ for $\ Q \in A_{\text{arith}}(\T_\F)$ can also be obtained. 
\end{remark}

{\bf (III) $\Z_p[[H]]$ free Selmer group:} 
Let $E$ be the elliptic curve  of conductor $121$ defined by the equation $y^2 + y = x^3 - x^2-887x - 10143$. Let us consider a particular false Tate curve extension by taking $p=5$ and $m=11$.  Then $G\cong \Z_5 \rtimes \Z_5$.  Let $X(E/K_\infty)$  (respectively $X(E/K_\cyc)$) denote the  dual of the $5^\infty$-Selmer group of $E$ over $K_\infty$ (respectively $K_\cyc$). Then using the fact that $X(E/K_{cyc})$ has  $\mu$-invariant zero together with \cite[Theorem  3.1]{hv},  {{} there  exists}  an injective $\mathbb{Z}_5[[H]]$  module homomorphism $f : X(E/K_\infty)  \lra  (\mathbb{Z}_5[[H]])^t$, for some integer $t$. 
{{}In fact, using $\tilde{E}(\mathbb F_5)[5]=0$, from the proof of  \cite[Theorem  3.1]{hv}   we can show  that $f$ is an isomorphism. 
We explain it here briefly. From  \cite[Theorem  3.1(ii)]{hv}, $A:=\mathrm{cokernel}(f)$ is a $\Z_5[[H]]$ module of finite cardinality. Taking coinvariance by $H$, we get  that $H_1(H,A)$ is a subgroup of $X(E/K_\infty)_H$. Now it follows from \cite[equation 3.6]{hv}  and the assumption $\tilde{E}(\mathbb F_5)[5]=0$ that  $X(E/K_\infty)_H$ is a free $\Z_p$-module. This implies that $H_1(H,A)=0$. Since, $A$ is a finite  and $H$ is pro-cyclic, we deduce $H_0(H,A)=0$. Hence, by Nakayama's Lemma, $A=0$.  Therefore $X(E/K_\infty)$ is a free $\Z_p[[H]]$-module. }   

 Let  $Q_0 \in A_{\text{arith}}(\T_\F)$ be such that the weight 2 specialization $f_{Q_0}$ corresponds to  $E$ via modularity. Since $E$ has CM, in this case $\T\cong \Z_5[[X]]$. Then following an argument similar to \cite[page 412]{sh}, we can deduce that the natural map 
$\mathcal{X}(\mathcal{L}_\mathcal{F}/K_\infty)/{Q_0} \lra X(E/K_\infty)$ is an isomorphism.   
 Now it is well known that $ \mathcal{X}(\mathcal{L}_\mathcal{F}/K_\infty)$  has no non-trivial pseudonull  $\T[[G]]$ submodule.  In particular, $\mathcal{X}(\mathcal{L}_\mathcal{F}/K_\infty)[Q_0]=0$. Since $X(E/K_\infty)$ is a free $\Z_5[[H]]$-module of finite rank, by Nakayama's lemma  $\mathcal{X}(\mathcal{L}_\mathcal{F}/K_\infty)$ is also a free $\mathcal{T}[[H]]$-module of finite rank. This also implies that $\mathcal{X}(\mathcal{L}_\mathcal{F}/K_\infty)/Q \cong X(L_Q/K_\infty)$ is free $O_Q[[H]]$-module of finite rank for every  $Q \in  A_{\text{arith}}(\T_\F)$. We notice  that as the rank of  $E(\Q) \ =1$, the $U$-Euler characteristics of $X(E/K_\infty)$ does not exist for any open normal subgroup $U$ of $G$. 
 
 Write $M= \mathcal{X}(\mathcal{L}_\mathcal{F}/K_\infty)$. Then applying Theorem \ref{thm-main}, we get  a countable subset $S_M$ of $S$ and for each  $ \theta \in S \setminus S_M, $ {{} there  exists} a countable subset $C_{M,\theta}$ of $C$ such that the following holds: For every $ \theta \in S \setminus S_M$ and for any $Q \in C \setminus C_{M,\theta}$, the $U$-Euler characteristic $\chi(U, \frac{M}{QM}(\theta))$ exists for every $U$. Further, by Lemma  \ref{le1.4},  it is given by $\chi\big(U, \frac{M}{QM}(\theta)\big) = \# \big(\frac{M}{QM}(\theta)\big)_U$.
 
 Now consider $Q \in  A_{\text{arith}}(\T_\F)$. Then by Corollary \ref{col11},  {{} there  exists} a countable subset $S_{M}$ of $S$ such that for any   $ \theta \in S \setminus S_{M}$   and for  every $Q \in  A_{\text{arith}}(\T_\F)$, $\chi(U, \frac{M}{QM}(\theta))$ exists for every $U$. Moreover, by Lemma  \ref{le1.4}, it is given by $\chi\big(U, \frac{M}{QM}(\theta)\big)  = \#\big(\frac{M}{QM}(\theta)\big)_U  =\#\big(X(L_Q/K_\infty)(\theta)\big)_U    =\#\big(X(E/K_\infty)(\theta)\big)_U$. In particular, for every $Q \in A_{\text{arith}}(\T_\F)$,   $\chi(U, X(L_Q/K_\infty)(\theta)\big)$ is finite and $=\#\big(X(E/K_\infty)(\theta)\big)_U$, for every open normal subgroup $U$ of $G$.

\medskip

Now, we  give  a proof of Theorem \ref{thm-main}. Also see   Remarks \ref{lastint}  and \ref{sfinal-remark}. 
\section{Proof of Theorem \ref{thm-main}}\label{sec0}

 This proof of Theorem \ref{thm-main} is divided {{} into} several remarks, lemmas and propositions. We begin with various {{}reduction} steps.

\begin{remark}\label{rem101}
Let $G, H, \T$  be as in Theorem \ref{thm-main}. Let $N$ be a finitely generated $\T[[G]]$ module. Then consider the module $N(p)$. Note that $\T[[G]]$ is noetherian and hence  $N(p) =N[p^r]$ for some $r\in \N$. Thus from the short exact sequence $0\lra N[p^r] \lra N \lra \frac{N}{N[p^r]} \lra 0$, we get another right exact sequence \begin{small}{$$ \frac{N[p^r]}{QN[p^r]} \lra \frac{N}{QN} \lra \dfrac{\frac{N}{N[p^r]}}{Q\frac{N}{N[p^r]}}  \lra 0,$$}\end{small}
for any  $Q \in C$. Now let $I_{N,Q}$ be the image of $\frac{N[p^r]}{QN[p^r]}$ in $\frac{N}{QN} $. Then $I_{N,Q}$ is a finitely generated $p^r$-torsion $\frac{\T}{Q\T}[[G]]$ module and hence $\chi(U, I_{N,Q})$ exists for every $U$  \cite[Proposition 1.6]{how}. Thus, $\chi(U,\frac{N}{QN})$ is finite if and only if \begin{small}{$\chi\Big(U,\frac{\frac{N}{N[p^r]}}{Q\frac{N}{N[p^r]}}\Big)$}\end{small} is finite.  

We can apply  this observation for $N=M(\theta)$, with $M, \theta$ as in Theorem \ref{thm-main} and     without any loss of generality,  we may assume that $M$ in Theorem \ref{thm-main} is a finitely generated $\T[[H]]$ module.
\end{remark}
Next we consider finitely generated $\T[[G]]$ modules which are $\T$-torsion.

\begin{proposition}\label{ttorsion}
Let $G, H$ be as in Theorem \ref{thm-main}. Let $N$ be a finitely generated $\T[[G]]$ module such that $N$ is also $\T$ torsion. Also assume $\T$ is a regular (local) ring. 
Then there exists a finite subset $C_{N}$ of $C$ such that for  any $Q\in C \setminus C_{N}$, $\chi(U, \frac{N}{QN})$ exists for every open normal subgroup $U$ of $G$.
\end{proposition}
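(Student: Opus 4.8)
The plan is to take $C_N$ to be the finite set of height $1$ primes of $\T$ lying in the support of $N$, and then to show that for every $Q\notin C_N$ the module $N/QN$ is a finitely generated $\Z_p[[G]]$-module killed by a power of $p$, for which the $U$-Euler characteristic is automatically finite by \cite[Proposition 1.6]{how}.

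First I would record that $I:=\mathrm{Ann}_\T(N)$ is a nonzero ideal of $\T$. Since $\T$ is central in $\T[[G]]$ and $N$ is finitely generated over $\T[[G]]$, say by $n_1,\dots,n_m$, and since $N$ is $\T$-torsion, each $n_j$ is annihilated by some nonzero $t_j\in\T$; as $\T$ is a domain, $t_1\cdots t_m$ is then a nonzero element of $I$. Hence $\mathrm{Supp}_\T(N)=V(I)$ is a proper closed subset of $\mathrm{Spec}\,\T$, and because $\T$ is regular (in particular, a two-dimensional Noetherian local domain) only finitely many height $1$ primes of $\T$ contain $I$. I set $C_N:=\{\,Q\in C:\ I\subseteq Q\,\}$, a finite subset of $C$.

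Next, fix $Q\in C\setminus C_N$. Then $I\not\subseteq Q$, so I may choose $t\in I\setminus Q$; thus $tN=0$, so $t$ annihilates $N/QN$, while its image $\bar t$ in the domain $\T/Q$ is nonzero. Therefore $N/QN$ is a module over $R_Q:=(\T/Q)/(\bar t)$. Since $Q$ has height $1$ in the two-dimensional local ring $\T$, the quotient $\T/Q$ is a one-dimensional complete Noetherian local domain with finite residue field; and since $Q\in C$ we have $p\notin Q$, so $\T/Q$ has characteristic $0$ with $p$ in its maximal ideal. Killing the nonzero element $\bar t$ of the domain $\T/Q$ makes $R_Q$ an Artinian local ring with finite residue field, hence finite as a set; moreover $p$ maps into the maximal ideal of $R_Q$, which is nilpotent, so $p^{k}=0$ in $R_Q$ for some $k\ge 1$. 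Consequently $R_Q$ is module-finite over $\Z_p$, hence $R_Q[[G]]$ is module-finite over $\Z_p[[G]]$; so $N/QN$, being finitely generated over $R_Q[[G]]$, is finitely generated over $\Z_p[[G]]$ and is annihilated by $p^k$.

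Finally, I would invoke the standard fact that a finitely generated $\Z_p[[G]]$-module $A$ with $p^kA=0$ has finite $U$-Euler characteristic for every open normal subgroup $U$ of $G$: such a $U$ has no element of order $p$, so $\Z_p[[U]]$ has finite global dimension, $A$ admits a finite resolution $P_\bullet\to A$ by finitely generated projective $\Z_p[[U]]$-modules, and each $H_i(U,A)=\tor_i^{\Z_p[[U]]}(\Z_p,A)=H_i\big(\Z_p\otimes_{\Z_p[[U]]}P_\bullet\big)$ is a subquotient of a finite free $\Z_p$-module annihilated by $p^k$, hence finite; alternatively one cites \cite[Proposition 1.6]{how} directly. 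Taking $A=N/QN$ then gives that $\chi\big(U,\tfrac{N}{QN}\big)$ is finite for every $U$, completing the argument. The reasoning is essentially formal; the one place that needs care is the previous paragraph, namely verifying that $R_Q$ is a \emph{finite} ring of $p$-power characteristic so that the coefficients can be pushed from $\T/Q$ down to $\Z_p$ — this is precisely where the height $1$ condition on $Q$ and the exclusion $p\notin Q$ built into $C$ are used. Regularity of $\T$ plays almost no role here beyond underwriting the dimension statements, though it will be needed elsewhere in the proof of \cref{thm-main}.
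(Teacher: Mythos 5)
Your argument is correct and follows essentially the same route as the paper's: extract a nonzero annihilator of $N$ from $\T$-torsionness, let $C_N$ be the finitely many height $1$ primes containing it, and show $N/QN$ is killed by a power of $p$ for $Q\notin C_N$ before citing \cite[Proposition 1.6]{how}. The only (immaterial) difference is in justifying the $p$-power torsion step: the paper observes that $(r,Q)$ has height $2$ in the regular $2$-dimensional local ring $\T$ and hence contains $p^{n_0}$, while you pass to the Artinian quotient $(\T/Q)/(\bar t)$ and note it is a finite ring of $p$-power characteristic.
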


\proof  By Remark \ref{rem101}, we may assume $N$ is $p$-torsion free. {Let $N$ be generated by the elements $x_1,\cdots, x_n$ as a $T [[G]]$-module}. As $N$ is $\T$ torsion, {{} there  exists}$ \ r_i \in \T$ such that $r_ix_i=0$. Set $r:=r_1r_2....r_n$ and notice that $r$ being an element of  $\T$ commutes with the elements of $\T[[G]]$. Thus we get that $rN=0$. As $\T$ is a UFD, let $r=p_1^{n_1}p_2^{n_2}...p_t^{n_t}$ be the unique factorization of $r$ where $p_i, \ 1 \leq i\leq t$ are height 1 primes in $\T$. Set $C_N:=  \{p_1, p_2,...p_t\}$. Then for any $Q \in C\setminus C_N$, $\frac{N}{QN}$ is annihilated by the height 2 ideal $(r, Q)$. Now $\T$ is a regular local ring of dimension $2$. Thus for some $n_0 \in \N$, $p^{n_0} \in (r,Q)$. This shows $p^{n_0}\frac{N}{QN}=0$. Hence for any choice of $Q \in C\setminus C_N$, $\chi(U, \frac{N}{QN})$ exists  for every $U$ \cite[Prop. 1.6]{how}. \qed

\begin{corollary}\label{col1}
Let us keep the setting of  Proposition \ref{ttorsion}. Then in the proof  of Theorem \ref{thm-main}, we may assume that $M$ is $\T$-torsion free. 
\end{corollary}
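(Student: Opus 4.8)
The plan is to split $M$ into its $\T$-torsion submodule and a $\T$-torsion free quotient, dispose of the former by Proposition~\ref{ttorsion}, apply the (still to be proved) $\T$-torsion free case of Theorem~\ref{thm-main} to the latter, and glue the two Euler characteristics along the resulting short exact sequence.

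First I would use Remark~\ref{rem101} to assume $M$ is finitely generated over $\T[[H]]$. Since $\T$ lies in the centre of $\T[[G]]$, the $\T$-torsion submodule $M_0:=\{x\in M : tx=0 \text{ for some } t\in\T\setminus\{0\}\}$ is a $\T[[G]]$-submodule of $M$, and it is finitely generated over $\T[[G]]$ because that ring is noetherian. The quotient $M':=M/M_0$ is $\T$-torsion free (as $\T$ is a domain) and, being a quotient of $M$, finitely generated over $\T[[H]]$; a fortiori $M'/M'(p)$ is finitely generated over $\T[[H]]$, so $M'$ satisfies the hypotheses of Theorem~\ref{thm-main}. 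Granting Theorem~\ref{thm-main} for $\T$-torsion free modules, I obtain a countable set $S_{M'}\subseteq S$ such that for every $\theta\in S\setminus S_{M'}$ there is a countable set $C_{M',\theta}\subseteq C$ with $\chi\big(U,\tfrac{M'}{QM'}(\theta)\big)$ finite for all open normal $U\leq G$ and all $Q\in C\setminus C_{M',\theta}$. Separately, applying Proposition~\ref{ttorsion} to the $\T$-torsion module $M_0$ produces a finite set $C_0\subseteq C$ such that, for $Q\notin C_0$, the module $\tfrac{M_0}{QM_0}$ is $p^{n_0}$-torsion for some $n_0$ (this is what the proof of Proposition~\ref{ttorsion} actually shows), hence finitely generated and $p$-power torsion over $(\T/Q\T)[[G]]$.

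To finish I would put $S_M:=S_{M'}$ and, for $\theta\in S\setminus S_M$, set $C_{M,\theta}:=C_{M',\theta}\cup C_0$, which is again countable. Fix such a $\theta$, a prime $Q\in C\setminus C_{M,\theta}$, and an open normal $U\leq G$. Twisting by $\theta$ is exact ($\Z_p(\theta)$ is $\Z_p$-free of rank one) and does not alter the $\T$-module structure, so $0\to M_0(\theta)\to M(\theta)\to M'(\theta)\to 0$ is exact and $\tfrac{M_0(\theta)}{QM_0(\theta)}\cong\big(\tfrac{M_0}{QM_0}\big)(\theta)$ is still $p^{n_0}$-torsion and finitely generated over $(\T/Q\T)[[G]]$. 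Reducing the twisted sequence modulo $Q$ is right exact, so with $I$ the image of $\tfrac{M_0(\theta)}{QM_0(\theta)}$ in $\tfrac{M(\theta)}{QM(\theta)}$ one gets a short exact sequence $0\to I\to\tfrac{M(\theta)}{QM(\theta)}\to\tfrac{M'(\theta)}{QM'(\theta)}\to 0$. As a quotient of $\tfrac{M_0(\theta)}{QM_0(\theta)}$, the module $I$ is $p^{n_0}$-torsion and finitely generated over $(\T/Q\T)[[G]]$, so $\chi(U,I)$ exists by \cite[Proposition~1.6]{how}; and $\chi\big(U,\tfrac{M'(\theta)}{QM'(\theta)}\big)=\chi\big(U,\tfrac{M'}{QM'}(\theta)\big)$ is finite by the case already granted. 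The homology long exact sequence of the displayed short exact sequence then forces all $H_i\big(U,\tfrac{M(\theta)}{QM(\theta)}\big)$ to be finite, and one concludes $\chi\big(U,\tfrac{M(\theta)}{QM(\theta)}\big)=\chi(U,I)\cdot\chi\big(U,\tfrac{M'(\theta)}{QM'(\theta)}\big)$; in particular it exists. This exhibits Theorem~\ref{thm-main} for $M$ as a consequence of its $\T$-torsion free case, which is exactly the assertion of the corollary.

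I do not expect a genuine obstacle here; the argument is essentially bookkeeping around exact sequences and the fact that twisting is exact and commutes with reduction modulo $Q$. The one step that needs slight care is the final gluing, namely checking that the image $I$ inherits an Euler characteristic and that finiteness of the homology groups propagates through the long exact sequence; both follow immediately from \cite[Proposition~1.6]{how} and the standard fact that in a bounded exact sequence of finite abelian groups the alternating product of the orders equals $1$.
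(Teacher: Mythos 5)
Your proposal is correct and follows essentially the same route as the paper: split off the $\T$-torsion submodule, handle it with Proposition \ref{ttorsion} (outside a finite set of primes $Q$ it becomes $p$-power torsion, so its Euler characteristic exists by \cite[Proposition 1.6]{how}), and glue along the reduction mod $Q$ of the short exact sequence. Your extra care in passing to the image $I$ rather than asserting left-exactness of reduction mod $Q$ is harmless and if anything slightly tidier than the paper's version, which relies on $Q$ being principal and the quotient being $\T$-torsion free for that injectivity.
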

\proof Take any $\theta \in S$ and write $N=M(\theta)$. Let $N_{\T}$ denote the $\T$-torsion submodule of $N$. Consider the exact sequence $0 \rightarrow N_{\T} \lra N \lra N/{N_{\T}}\rightarrow 0.$ Then for any $Q\in C$, we get the induced exact sequence \begin{small}{$$0 \rightarrow \frac{N_{\T}}{QN_{\T}} \lra \frac{N}{QN} \lra \dfrac{\frac{N}{N_{\T}}}{Q\frac{N}{N_{\T}}}\rightarrow 0.$$}\end{small} By Proposition \ref{ttorsion}, there exists a finite subset $C_N= C_{M,\theta}$ of $C$ such that for any $Q\in C\setminus C_N$, $\chi(U, \frac{N_{\T}}{QN_{\T}})$ exists for every $U$. Thus the statement of Theorem \ref{thm-main} holds for $M$ if and only if it holds for $M/{M_\mathcal T}$.\qed

Recall the following well known result, which can be conveniently found in \cite[Lemma 6.15]{sud}.
\begin{lemma}\label{hi}
Let $\T$ be  a  commutative, complete local domain  and let $\T$ be  finite over $\Z_p[[X]]$. Let $Q$ be a height 1 prime ideal in $\T$. Put $\mathfrak q= Q\cap \Z_p[[X]]$. Let  $Q=Q_1, Q_2,\cdots Q_d$ be the   height $1$ prime ideals in $\T$ lying above $\mathfrak q$. If $q$ is unramified in $\T$, then  the  kernel  and cokernel of the natural map $\T/{\mathfrak q \T} \lra \underset{1\leq i\leq d}{\bigoplus}\T/{Q_i\T}$ are finite of $p$-power order. \qed
\end{lemma}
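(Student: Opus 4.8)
The plan is to reduce everything to the localization at the height-one prime $\mathfrak q$, where the unramifiedness hypothesis turns the map into an isomorphism, and then to use that $\Z_p[[X]]$ is two-dimensional to upgrade ``dies after localizing at $\mathfrak q$'' to ``finite''. Concretely, I would set $A=\Z_p[[X]]$, a two-dimensional regular local domain with maximal ideal $\mathfrak m=(p,X)$, regard $A\subseteq\T$, and pass to $\T_{\mathfrak q}:=(A\setminus\mathfrak q)^{-1}\T$. Since $A_{\mathfrak q}$ is a DVR and $\T_{\mathfrak q}$ is a domain module-finite over it, $\T_{\mathfrak q}$ is a one-dimensional semilocal Noetherian domain whose maximal ideals, by lying-over, going-up and incomparability, are exactly $Q_1\T_{\mathfrak q},\dots,Q_d\T_{\mathfrak q}$ (going-down for the normal domain $A$ also shows these are precisely the height-one primes of $\T$ above $\mathfrak q$, as in the statement). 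I would then read the unramifiedness of $\mathfrak q$ in $\T$ as the assertion that $\mathfrak q\T_{\mathfrak q}$ is a radical ideal; as its minimal primes are the maximal ideals $Q_i\T_{\mathfrak q}$ and these are pairwise comaximal, the Chinese Remainder Theorem makes
\[
\T_{\mathfrak q}/\mathfrak q\T_{\mathfrak q}\;\longrightarrow\;\bigoplus_{i=1}^{d}\T_{\mathfrak q}/Q_i\T_{\mathfrak q}
\]
an isomorphism. Since localization at $\mathfrak q$ is exact and commutes with forming $\T/\mathfrak q\T$ and $\T/Q_i\T$, this says precisely that the kernel $K$ and cokernel $C$ of $\varphi\colon\T/\mathfrak q\T\to\bigoplus_i\T/Q_i\T$ satisfy $K_{\mathfrak q}=C_{\mathfrak q}=0$.

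Next I would observe that $\T/\mathfrak q\T$ is module-finite over $A/\mathfrak q$ (because $\T$ is module-finite over $A$), hence so is each quotient $\T/Q_i\T$, so $\varphi$ is a homomorphism of finitely generated $A/\mathfrak q$-modules and $K$, $C$ are finitely generated over the Noetherian ring $A$. From $K_{\mathfrak q}=0$ and finite generation I get $s\in A\setminus\mathfrak q$ with $sK=0$, and similarly $tC=0$ for some $t\in A\setminus\mathfrak q$. Then $K$ is annihilated by the ideal $\mathfrak q+(s)$; since $s\notin\mathfrak q$ and $\mathrm{ht}\,\mathfrak q=1$ in the two-dimensional local ring $A$, the ideal $\mathfrak q+(s)$ is either $A$ (so $K=0$) or $\mathfrak m$-primary, whence $\mathfrak m^{n}\subseteq\mathfrak q+(s)$ for some $n$. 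Therefore $K$ is a finitely generated module over the finite ring $A/\mathfrak m^{n}$, hence finite, and a finite abelian group has $p$-power order; the identical argument disposes of $C$.

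The step needing the most care is the bookkeeping around ``unramified'': one must pin down that it yields exactly $\T_{\mathfrak q}/\mathfrak q\T_{\mathfrak q}\cong\prod_i\T_{\mathfrak q}/Q_i\T_{\mathfrak q}$ (equivalently, that $\mathfrak q\T_{\mathfrak q}$ is radical), and then that ``finitely generated and localizing to zero at $\mathfrak q$'' genuinely forces annihilation by a height-two, hence $\mathfrak m$-primary, ideal of $A$. Everything else is routine commutative algebra; note that the argument never actually uses $\mathfrak q\neq(p)$, only $s\notin\mathfrak q$ together with $\dim A=2$.
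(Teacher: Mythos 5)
Your argument is correct and complete; note that the paper itself gives no proof of this lemma but simply cites \cite[Lemma 6.15]{sud}, and your localization-at-$\mathfrak q$ argument (CRT after inverting $A\setminus\mathfrak q$ using that $\mathfrak q\T_{\mathfrak q}$ is radical, then upgrading $K_{\mathfrak q}=C_{\mathfrak q}=0$ to annihilation by an $\mathfrak m$-primary ideal of the two-dimensional ring $A=\Z_p[[X]]$) is exactly the standard proof of that cited result. The only phrasing to tighten is ``a finite abelian group has $p$-power order'': what you actually use is that $K$ and $C$ are killed by $p^{n}\in\mathfrak m^{n}$, so the finite modules in question are $p$-groups.
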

Using Lemma \ref{hi}, we further reduce Theorem \ref{thm-main} to the case $\T=Z_p[[X]]$.
\begin{corollary}\label{hidaredcor}
  Let us keep the hypotheses and setting of Theorem \ref{thm-main}. Then in Theorem \ref{thm-main}, without any loss of generality, we may assume $\T=\Z_p[[X]]$.
\end{corollary}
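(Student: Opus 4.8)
The plan is to deduce Theorem~\ref{thm-main} for a general $\T$ from its (yet to be established) validity for $\Z_p[[X]]$, by restricting scalars along the module-finite extension $\Z_p[[X]]\hookrightarrow\T$ and then patching the conclusion over the finite set of height-$1$ primes of $\T$ lying above a given height-$1$ prime of $\Z_p[[X]]$, using Lemma~\ref{hi}. First I would observe that if $M$ is finitely generated over $\T[[G]]$ with $M/M(p)$ finitely generated over $\T[[H]]$, then, viewed over $\Z_p[[X]][[G]]$, the module $M$ is finitely generated over $\Z_p[[X]][[G]]$ and $M/M(p)$ is finitely generated over $\Z_p[[X]][[H]]$ (since $\T[[G]]$, resp.\ $\T[[H]]$, is module-finite over $\Z_p[[X]][[G]]$, resp.\ $\Z_p[[X]][[H]]$); moreover, for any continuous $\theta\colon\Gamma\to\Z_p^\times$, the twist $M(\theta)$ and the quotients $\frac{M}{\mathfrak{q}M}(\theta)$, $\frac{M}{QM}(\theta)$ are the same whether $M$ is regarded over $\T$ or over $\Z_p[[X]]$, because a twist is formed by tensoring over $\Z_p$ and the coefficient ring acts only on the first factor, with $\mathfrak{q}\subseteq\Z_p[[X]]$. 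Applying Theorem~\ref{thm-main} for $\Z_p[[X]]$ to $M$ then produces a single character $\theta$ and a countable set $D$ of height-$1$ primes of $\Z_p[[X]]$ such that $\chi\big(U,\frac{M}{\mathfrak{q}M}(\theta)\big)$ is finite for every open normal $U\trianglelefteq G$ whenever a height-$1$ prime $\mathfrak{q}\neq(p)$ lies outside $D$. This $\theta$ is the character of the statement, and I would set
\[
C_{M,\theta}:=\{\,Q\in C^{\T}\ :\ Q\cap\Z_p[[X]]\in D\ \text{ or }\ Q\cap\Z_p[[X]]\ \text{ramifies in }\T\,\}.
\]
This set is countable: $D$ is countable and, $\T$ being module-finite over $\Z_p[[X]]$, each $\mathfrak{q}\in D$ has only finitely many primes of $\T$ above it; and the primes $\mathfrak{q}$ that ramify in $\T$ are finite in number, since the extension of fraction fields is finite and, in characteristic $0$, separable, so its branch locus meets the height-$1$ primes in finitely many points only.

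Next I would fix $Q\in C^{\T}\setminus C_{M,\theta}$ and put $\mathfrak{q}=Q\cap\Z_p[[X]]$, so $\mathfrak{q}\notin D$, $\mathfrak{q}\neq(p)$ (as $Q\nmid(p)$), and $\mathfrak{q}$ is unramified in $\T$; let $Q=Q_1,\dots,Q_d$ be the height-$1$ primes of $\T$ above $\mathfrak{q}$. By Lemma~\ref{hi} the natural map $\T/\mathfrak{q}\T\to\bigoplus_{i=1}^{d}\T/Q_i\T$ has kernel $K$ and cokernel $C$ that are finite of $p$-power order, say with $p^aK=p^aC=0$; note that $K$, $C$ and the image $I$ are all annihilated by $\mathfrak{q}\T$. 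By Remark~\ref{rem101} we may assume $M$, hence $M(\theta)$, is finitely generated over $\T[[H]]$. Tensoring the short exact sequences $0\to K\to\T/\mathfrak{q}\T\to I\to0$ and $0\to I\to\bigoplus_i\T/Q_i\T\to C\to0$ over $\T$ with $M(\theta)$ yields exact sequences relating $\frac{M}{\mathfrak{q}M}(\theta)$ and $\bigoplus_i\frac{M}{Q_iM}(\theta)$ modulo the ``error terms'' $M(\theta)\otimes_\T K$, $M(\theta)\otimes_\T C$, $\tor_1^{\T}(M(\theta),I)$ and $\tor_1^{\T}(M(\theta),C)$; a routine check shows each of these is finitely generated over $\T[[H]]$ and annihilated by $\mathfrak{q}\T+p^a\T$.

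The step needing the most care is to show that these error terms have finite $U$-Euler characteristic for every $U$, and this is exactly where $\mathfrak{q}\neq(p)$ is used. Since $(\mathfrak{q},p)$ is $\mathfrak{m}$-primary in $\Z_p[[X]]$, the ring $\Z_p[[X]]/\big(\mathfrak{q}+p^a\Z_p[[X]]\big)$ is finite, hence so is $R_0:=\T/(\mathfrak{q}\T+p^a\T)$; therefore each error term, being finitely generated over $R_0[[H]]$, is finitely generated over $\Z_p[[H]]$, a fortiori over $\Z_p[[G]]$, and is killed by $p^a$, so its $U$-Euler characteristic exists by \cite[Proposition 1.6]{how} (exactly as in Remark~\ref{rem101}). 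Granting this, the long exact homology sequences attached to the two short exact sequences above, together with the two-out-of-three behaviour of the Euler characteristic in short exact sequences, propagate finiteness from $\chi\big(U,\frac{M}{\mathfrak{q}M}(\theta)\big)$ — finite because $\mathfrak{q}\notin D$ — to $\chi\big(U,\bigoplus_i\frac{M}{Q_iM}(\theta)\big)$, and hence, as homology commutes with finite direct sums, to $\chi\big(U,\frac{M}{QM}(\theta)\big)$, for every $U$. This is Theorem~\ref{thm-main} for $\T$ with exceptional data $(\theta,C_{M,\theta})$; and, just as in Corollary~\ref{col1}, one may first split off the $\T$-torsion submodule of $M(\theta)$, so that this reduction is compatible with the remaining steps of the proof.
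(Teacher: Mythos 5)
Your proposal is correct and follows essentially the same route as the paper: restrict scalars to $\Z_p[[X]]$, discard the finitely many ramified height-$1$ primes (the paper bounds these via the K\"ahler differential $\Omega_{\T/\Z_p[[X]]}$, you via separability of the fraction-field extension), and transfer finiteness of $\chi\big(U,\tfrac{M}{\mathfrak q M}(\theta)\big)$ to $\chi\big(U,\tfrac{M}{QM}(\theta)\big)$ through the comparison sequence of Lemma \ref{hi}, whose error terms are finitely generated $p$-power torsion and hence have finite Euler characteristic by \cite[Proposition 1.6]{how}. Your explicit unpacking of the kernel/cokernel into $M(\theta)\otimes_\T K$, $M(\theta)\otimes_\T C$ and the $\tor_1$ terms is just a more detailed derivation of the four-term exact sequence \eqref{eqredhida} that the paper quotes directly.
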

\proof 
 Since the K\"alher differential $\Omega_{\T/{\Z_p[[X]]}}$ is supported outside finitely many height 1 prime ideals in $\Z_p[[X]]$, it follows that only finitely many height 1 primes of $\Z_p[[X]]$ can ramify in $\T$. Let $C^{\Z_p[[X]], \T}_1$ be the finite  set of height 1 prime ideals in $\Z_p[[X]]$ that ramify in $\T$. Let $\mathfrak q \in C^{\Z_p[[X]]} \setminus C^{\Z_p[[X]],\T}_1$ and let $Q=Q_1,Q_2,\cdots Q_d$ be the height 1 primes in $\T$ dividing $\mathfrak q$. Recall $M$ in Theorem \ref{thm-main} is a finitely generated $\T[[G]]$ module. By Lemma \ref{hi}, for any $\theta\in S$, there exists an exact sequence  
 \begin{small}{
\begin{equation}\label{eqredhida} 0 \rightarrow K^\theta_\mathfrak q \lra \frac{M(\theta)}{\mathfrak q M(\theta)} \lra \underset{1\leq i\leq d}{\bigoplus}\frac{M(\theta)}{{Q_i}M(\theta)} \lra  CK^\theta_\mathfrak q\rightarrow 0,
\end{equation}}\end{small} 
where $K^\theta_\mathfrak q$ and $CK^\theta_\mathfrak q$ are finitely generated $p$-power torsion $\frac{\T}{\mathfrak q\T}[[G]]$ modules. In particular, $K^\theta_\mathfrak q$ and $CK^\theta_\mathfrak q$ are finitely generated $p$-power torsion $\Z_p[[G]]$ modules. Now, as explained in Remark \ref{rem101}, $\chi(U, K^\theta_\mathfrak q)$ and $\chi(U, CK^\theta_\mathfrak q)$ always exist for any open normal subgroup $U$ of $G$.

Now assume  Theorem \ref{thm-main} holds for $\T = \Z_p[[X]]$. Then there exists  a countable subset $S_M$ of $S$ and for any  $\theta \in S\setminus S_M$ there exists a countable subset $C^{\Z_p[[X]]}_{M, \theta}$ of $C^{\Z_p[[X]]}$ such that the following holds:  For any  $\theta \in S\setminus S_M$ and for any $Q\in C^{\Z_p[[X]]}\setminus C^{\Z_p[[X]]}_{M, \theta}$, the $U$-Euler characteristic $\chi\left(U,\frac{M}{\mathfrak q M}(\theta)\right)$ exists for every open normal subgroup $U$ of $G$. Note $C^{\Z_p[[X]]}_{M,\theta} \cup C_1^{\Z_p[[X]], \T}$ is countable and hence $C^0_{M(\theta)}:= \{ Q \in C^\T : Q\cap \Z_p[[X]] \in C^{\Z_p[[X]]}_{M,\theta} \cup C_1^{\Z_p[[X]],\T}\}$ is a countable subset of $C^\T$. Now from \eqref{eqredhida}, for any $\theta \in S\setminus S_M$ and for any $Q\in C\setminus C^0_{M,\theta}$, $\chi(U, \frac{M}{Q M}(\theta))$ is finite for every $U$. Thus Theorem \ref{thm-main} holds for a general $\T$. \qed
\begin{remark}\label{rem102}
As explained in \cite[Remark 2.3]{ss}, we may also assume without any loss of generality, that $G$ is a compact, pro-$p$, $p$-adic Lie group without any element of order $p$. It should be noted \cite[Remark 2.3]{ss} uses fundamental work of Lazard \cite{la}.
\end{remark}
\begin{lemma} \label{le1.2}
  	Let $G, H$ be in the setting of Theorem \ref{thm-main} and assume $\T=\Z_p[[X]]$. Let $M$ be a finitely generated $\Z_p[[X]]{{}[}[G]]$ module which is finitely generated over $\Z_p[[X]][[H]]$. Also assume {{} there  exists} a height 1 prime ideal $Q_0$ in $\Z_p[[X]]$ with  $\Q_0 \neq (p)$, such that $\#(\frac{M}{Q_0M}\big)_U$ is finite for every open normal subgroup $U$ of $G$. Then for all but countably many height 1 prime ideals $Q$ in $\Z_p[[X]]$, $\#(\frac{M}{QM}\big)_U$ is finite for every open normal subgroup $U$ of $G$.
  \end{lemma}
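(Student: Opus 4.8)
The plan is to turn the question into elementary commutative algebra over $\Lambda:=\Z_p[[X]]$ by passing to $U$-coinvariants. For an open normal subgroup $U$ of $G$ write $M_U:=H_0(U,M)$. First I would record that $M_U$ is a finitely generated $\Lambda$-module: since $H$ is normal in $G$, the subgroup $U\cap H$ is open in $H$ and normal in $U$, so $M$ is finitely generated over $\Lambda[[U\cap H]]$, hence $H_0(U\cap H,M)$ is finitely generated over $\Lambda$, and $M_U$ is the quotient of this by the finite group $U/(U\cap H)$. Next, every height $1$ prime $Q$ of $\Lambda$ is principal, say $Q=(f)$ with $f$ central in $\Lambda[[G]]$; applying the right exact functor $H_0(U,-)$ to the exact sequence $M\xrightarrow{\,f\,}M\to M/QM\to 0$ produces a natural isomorphism $\big(M/QM\big)_U\cong M_U/QM_U$. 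So it suffices to prove: for all but countably many height $1$ primes $Q$ of $\Lambda$, the $\Lambda$-module $M_U/QM_U$ is finite for every open normal $U$.

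The decisive step — and the only place the hypothesis on $Q_0$ is used — is to deduce that $M_U$ is $\Lambda$-\emph{torsion} for every $U$. Here I would localise at the height $1$ prime $Q_0$: the ring $\Lambda_{Q_0}$ is a discrete valuation ring, so $(M_U)_{Q_0}\cong\Lambda_{Q_0}^{\,a}\oplus(\text{finite length})$ with $a=\rk_\Lambda M_U$, and the assumed finiteness of $(M/Q_0M)_U\cong M_U/Q_0M_U$ forces $(M_U)_{Q_0}\otimes_{\Lambda_{Q_0}}\kappa(Q_0)=0$, whence $a=0$. I expect this to be the main obstacle (really, the main subtlety), and it is genuinely necessary: if some $M_U$ had positive $\Lambda$-rank then $M_U/QM_U$ would be infinite for all but finitely many $Q$, and the statement would fail; the Example already exhibits a genuinely infinite exceptional set, so ``all but countably many'' is the best one can hope for.

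Once $M_U$ is known to be finitely generated and torsion over the two-dimensional regular local ring $\Lambda$, its support consists of the maximal ideal together with the finitely many height $1$ primes minimal over $\mathrm{Ann}_\Lambda(M_U)$; hence for any height $1$ prime $Q$ outside this finite set $C_U$ the module $M_U/QM_U$ is supported only at the maximal ideal and is therefore finite. Finally, a compact $p$-adic Lie group is finitely generated as a profinite group, so it has only countably many open subgroups; thus $\bigcup_U C_U$ is a countable union of finite sets, hence countable, and any height $1$ prime $Q\notin\bigcup_U C_U$ satisfies the conclusion. Apart from the localisation argument above, the only points needing a line of care are the identification $(M/QM)_U\cong M_U/QM_U$ (right exactness of $H_0(U,-)$ plus $Q$ principal and $f$ central) and the countability of the set of open subgroups of $G$; everything else is formal. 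One may, if convenient, first invoke \cref{rem101} to reduce to the case where $M$ is $p$-torsion free, but this is not needed for the present lemma.
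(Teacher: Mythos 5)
Your proof is correct and follows essentially the same route as the paper's: pass to the finitely generated $\Z_p[[X]]$-module $M_U$, use the hypothesis at $Q_0$ to conclude $M_U$ has $\Z_p[[X]]$-rank zero, exclude the finitely many height~$1$ primes in the support of $M_U$, and take a countable union over the countably many open normal subgroups $U$. The only (immaterial) difference is that you establish rank zero by localising at $Q_0$ and conclude via supports, whereas the paper invokes the structure theorem for finitely generated $\Z_p[[X]]$-modules to the same effect.
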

  \begin{proof}
  	Let $U$ be an open normal subgroup of $G$. As $[G:U]<\infty$, $M_U$ is a finitely generated $\Z_p[[X]]$ module. Now, from the structure theorem for finitely generated $\Z_p[[X]]$ modules, we know that there exists a $\Z_p[[X]]$ module homomorphism:
  	\begin{small}{\begin{equation}	\label{e1}
  M_U \rightarrow \Z_p[[X]]^{r_U} {{} {\oplus} \bigoplus \limits_{i=1}^{s_{\tiny{U}}}} \dfrac{\Z_p[[X]]}{(Q_{i,U}^{n_i})}  
  	\end{equation}}\end{small}
   with finite kernel and cokernel, where $r_{\tiny{U}},n_i, s_{\tiny{U}} \in \N \cup \{0\}$ and $Q_{i,U}$'s are height 1 prime ideals in $\Z_p[[X]]$. Note that $ { ( \frac{M}{Q_0M})}_{U}\cong \frac {M_U}{Q_0M_U}.$
  	Hence using the fact that the cardinality of $(\frac {M}{Q_0M})_{U}$ is finite, we deduce from  \eqref{e1}  that $r_U=0$. So, if we choose $Q \in C$ such that $Q\not\in C_{M,U}:= {{} \bigcup\limits_{i=1}^{s_U}Q_{i,U}}$, then $\#\big(\frac{M}{QM}\big)_U$ is finite. Since $G$ is a profinite group, it has countable base at identity. Thus we can take the set $\mathcal U$ of open normal subgroups $U$ of $G$ to be countable. Then for any $Q \in C$ which does not belong to the countable set  $C_M:= \underset{U \in \mathcal U}{\cup} C_{M,U}$,  we get $\#\big(\frac{M}{QM}\big)_U$ is finite for every $U.$ This completes the proof of the lemma.
  \end{proof}
We next prove the following proposition:  
  \begin{proposition} \label{th1.1}
  	Let $G, H$ be {{} as} in  the setting of Theorem   \ref{thm-main}. Let $M$ be a finitely generated $\Z_p[[X]][G]]$ module which is finitely generated over $\Z_p[[X]][[H]]$. Then there exists  a countable subset $S_M$ of $S$ and further, for each $\theta \in S\setminus S_M$, there exists a countable subset $C_{M, \theta} $ of $C$  such the  the following holds: If we choose any $\theta \in S \setminus S_M$ and   any   $Q  \in C \setminus C_{M, \theta}$, then $\#\big(\frac{M}{QM}(\theta)  \big)_U$ is finite for every open normal subgroup $U$ of $G$.
    \end{proposition}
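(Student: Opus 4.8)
The plan is to combine two ingredients already available in the excerpt: the strengthened form of \ref{thm2}, which for a single coefficient specialization produces a countable set of ``bad'' characters outside of which the twisted coinvariants are finite; and \cref{le1.2}, which propagates such finiteness from one height $1$ prime of $\Z_p[[X]]$ to all but countably many. The strategy is to produce one ``seed'' prime $Q_0$, chosen once and independently of $\theta$, at which $M(\theta)$ specializes well, and then to feed the module $M(\theta)$ into \cref{le1.2}.

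Concretely, I would fix once and for all the height $1$ prime $Q_0=(X)$ of $\Z_p[[X]]$; note $Q_0\neq(p)$ and $\Z_p[[X]]/Q_0\cong\Z_p$, the ring of integers of $\Q_p$. Since $M$ is finitely generated over $\Z_p[[X]][[H]]$, the quotient $M_0:=M/XM$ is a finitely generated $\Z_p[[G]]$-module which is finitely generated over $\Z_p[[H]]$. Applying \ref{thm2}, in the form with a countable exceptional set of characters, to the $\Z_p[[G]]$-module $M_0$ yields a countable subset $S_M:=S_{M_0}\subseteq S$ such that $(M_0(\theta))_U=H_0(U,M_0(\theta))$ is finite for every open normal subgroup $U$ of $G$ whenever $\theta\in S\setminus S_M$. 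This $S_M$ will be the countable set claimed in the proposition.

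Now fix any $\theta\in S\setminus S_M$. Since $\theta$ factors through $\Gamma=G/H$, it is trivial on $H$ and does not involve the variable $X$; two elementary consequences are: (i) $M(\theta)$ is again a finitely generated $\Z_p[[X]][[G]]$-module which is finitely generated over $\Z_p[[X]][[H]]$, the $H$-action being unchanged by the twist; and (ii) for every height $1$ prime $Q$ of $\Z_p[[X]]$ there is a canonical $\Z_p[[G]]$-isomorphism $\tfrac{M(\theta)}{QM(\theta)}\cong\big(\tfrac{M}{QM}\big)(\theta)$, since $Q$ acts through the coefficient ring and hence commutes with $-\otimes_{\Z_p}\Z_p(\theta)$. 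In particular $\big(\tfrac{M(\theta)}{Q_0M(\theta)}\big)_U\cong(M_0(\theta))_U$ is finite for every $U$, so $M(\theta)$ satisfies the hypotheses of \cref{le1.2} with distinguished prime $Q_0$.

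Therefore \cref{le1.2}, applied to $M(\theta)$, produces a countable set of height $1$ primes of $\Z_p[[X]]$ outside of which $\big(\tfrac{M(\theta)}{QM(\theta)}\big)_U$ is finite for every $U$; intersecting it with $C$ gives the required countable subset $C_{M,\theta}\subseteq C$. For $\theta\in S\setminus S_M$ and $Q\in C\setminus C_{M,\theta}$ we conclude via (ii) that $\#\big(\tfrac{M}{QM}(\theta)\big)_U=\#\big(\tfrac{M(\theta)}{QM(\theta)}\big)_U$ is finite for every open normal subgroup $U$ of $G$, as desired. The substantive content lies in \cref{le1.2} (where the structure theorem for finitely generated $\Z_p[[X]]$-modules forces the free rank of $M_U$ to vanish at a good prime); the only points needing care in assembling the present argument are the compatibility of the twist with the coefficient variable $X$ recorded in (i) and (ii), and the fact that a single seed prime $Q_0$ can be chosen independently of the character $\theta$.
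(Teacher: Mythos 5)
Your argument is correct and follows essentially the same route as the paper: establish finiteness of the $U$-coinvariants of the specialization at the seed prime $Q_0=(X)$ for all $\theta$ outside a countable set, then invoke \cref{le1.2} to propagate this to all but countably many height~$1$ primes. The only (cosmetic) difference is that the paper obtains the seed finiteness by applying \ref{thm2} to $M$ viewed as a module over $\Z_p[[G_1\times G]]$ with $G_1\times H$ playing the role of $H$, whereas you apply \ref{thm2} directly to $M_0=M/XM$ over $G$; these are equivalent since $(M(\theta))_{G_1\times U}\cong (M_0(\theta))_U$.
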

    \begin{proof}
    	Note that $\Z_p[[X]][[G]]\cong \Z_p[[G_1\times G]]$, where $G_1\cong \mathbb{Z}_{p}$. Let $U$ be any open normal subgroup of $G$. Then $U':=G_1 \times U$ is an open normal subgroup of $G_1 \times G$ that maps onto $U$ under the projection map $\pi:G_1 \times G \longrightarrow G$.
    	 Now by \ref{thm2}, there is a countable subset $S_M$ of $S$, such that for any  $\theta \in S \setminus S_M$, we have $(M(\theta))_{U'}= (M(\theta))_{G_1\times U}$ is finite for every $U$. 
        Therefore, $(M(\theta))_{U'}=(M(\theta)_{U})_{U'/U}\cong \frac{M(\theta)_{U}}{(X)}$ is finite for every $U$.

    	 So writing $N=M(\theta)$ and $Q_0=(X)$, we have  $\big(\frac {N}{Q_0 N}\big)_{U}$ is finite for every $ U$ open normal in $G$. Now applying  Lemma \ref{le1.2}, there is a countable subset $C_{M,\theta} \subset C$ such that for any $Q \in C\setminus C_{M, \theta}$, we have $(\frac {N}{Q N})_{U}= \big(\frac {M}{QM}(\theta)\big)_{U}$ is finite for every open normal subgroup $U$.
     \end{proof}

The next lemma is easy to prove and used later.
 \begin{lemma}\label{lem:1.8}
	Let $G, H$ be as in Theorem \ref{thm-main}. Let $G_1$ be a $p$-adic Lie group isomorphic to $\Z_p$.
	Let $K$ be an open subgroup of $G_1\times G$ such that $G_1 \times H \subset K$. Then $K=G_1\times G^{0}$, where $G^{0}$ is an open subgroup of $G$. \qed
\end{lemma}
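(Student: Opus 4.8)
The plan is to use the projection onto the second factor. First I would let $\pi\colon G_1\times G\longrightarrow G$ denote the canonical projection and set $G^0:=\pi(K)$. Since $K$ is open in $G_1\times G$ and $\pi$ is a continuous open surjective homomorphism, $G^0$ is an open subgroup of $G$; this already secures the ``$G^0$ open in $G$'' part of the claim.

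It then remains to verify the equality $K=G_1\times G^0$ by proving both inclusions. The inclusion $K\subseteq G_1\times G^0$ is immediate, since by definition of $G^0$ the $G$-component of any element of $K$ lies in $\pi(K)=G^0$. For the reverse inclusion, I would take an arbitrary $(g_1,g)\in G_1\times G^0$; as $g\in G^0=\pi(K)$, there exists $g_1'\in G_1$ with $(g_1',g)\in K$, and then the factorization $(g_1,g)=\big(g_1(g_1')^{-1},1\big)\cdot(g_1',g)$ exhibits $(g_1,g)$ as a product of $(g_1',g)\in K$ with $\big(g_1(g_1')^{-1},1\big)\in G_1\times\{1\}\subseteq G_1\times H\subseteq K$, using $1\in H$ and the hypothesis $G_1\times H\subseteq K$. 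Hence $(g_1,g)\in K$, so $G_1\times G^0\subseteq K$.

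Putting the two inclusions together gives $K=G_1\times G^0$, completing the proof. There is essentially no obstacle here; the only mildly delicate point is the openness of $G^0=\pi(K)$ in $G$, which holds because coordinate projections of a direct product of topological groups are open maps, so $\pi$ carries the open subgroup $K$ onto an open subgroup of $G$.
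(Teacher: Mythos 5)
Your proof is correct: the paper states this lemma without proof (declaring it ``easy to prove''), and your argument --- projecting $K$ to $G^0:=\pi(K)$, which is open since coordinate projections are open maps, and then using $G_1\times\{1\}\subseteq G_1\times H\subseteq K$ to recover the full fibre over each $g\in G^0$ --- is exactly the standard argument the authors leave implicit. Nothing is missing.
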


 Using Lemma \ref{lem:1.8}, we deduce the following result.
\begin{lemma} \label{le1.4}
	Let $G$ be a compact, pro-$p$, $p$-adic Lie group without any $p$-torsion element. Let $H$ be a closed normal subgroup of $G$ with $\Gamma:=G/H \cong \Z_p$.  Let $M$ be a finitely generated $\Z_p[[X]][[G]]$ module which  is also  finitely generated  over $\Z_p[[X]][[H]]$. Then, there exists an open subgroup $G^0$ of $G$ with $H\subset G^0$ and a resolution 
	$$0\rightarrow N_k \rightarrow N_{k-1}\rightarrow \cdots \rightarrow N_1 \rightarrow M \rightarrow 0$$
	of $M$ by finitely generated $\Z_p[[X]][[G^0]]$ modules $N_i$,  $\ 1 \leq i \leq k$ such that each $N_i$ is a free $\Z_p[[X]][[H]]$ module of finite rank.
\end{lemma}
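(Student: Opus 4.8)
The plan is to reduce the statement to constructing the first term of the resolution and then to iterate that construction, using the finite global dimension of $R:=\Z_p[[X]][[H]]$ to make the process terminate. It is convenient to write $\Z_p[[X]]=\Z_p[[G_1]]$ with $G_1\cong\Z_p$ and to put $\widetilde G:=G_1\times G$, $\widetilde H:=G_1\times H$, so that $\Z_p[[X]][[G]]=\Z_p[[\widetilde G]]$, $R=\Z_p[[\widetilde H]]$ and $\widetilde\Gamma:=\widetilde G/\widetilde H\cong\Gamma\cong\Z_p$. Since $G$ is pro-$p$ without $p$-torsion, so is $\widetilde H$; hence $R$ is a complete noetherian local ring with maximal ideal $\mathfrak m$ and residue field $\mathbb F_p$, and $R$ has finite global dimension, say $d$, by \cite{B,la}. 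By \cref{lem:1.8}, every open subgroup of $\widetilde G$ containing $\widetilde H$ is of the form $G_1\times G^0$ with $G^0$ open in $G$ and $H\subseteq G^0$; so it suffices to produce the $N_i$ as finitely generated $\Z_p[[\widetilde G^0]]$-modules, each free of finite rank over $R$, for one fixed open $\widetilde G^0\supseteq\widetilde H$.

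The key step is the following claim: for every $\Z_p[[\widetilde G]]$-module $L$ that is finitely generated over $R$, there exist an open subgroup $\widetilde G'$ of $\widetilde G$ with $\widetilde H\subseteq\widetilde G'$ and a surjection $\pi\colon P\twoheadrightarrow L$ of $\Z_p[[\widetilde G']]$-modules with $P$ free of finite rank over $R$. (The proof below uses only that $\widetilde G$ is pro-$p$, compact $p$-adic Lie, without $p$-torsion, and that $\widetilde G/\widetilde H\cong\Z_p$, so the claim applies verbatim with $\widetilde G$ replaced by any open subgroup containing $\widetilde H$.) Granting this, one iterates: apply the claim to $L=M$ to get $N_1\twoheadrightarrow M$ over some $\widetilde G^{(1)}$; the kernel $M_1\subseteq N_1$ is finitely generated over the noetherian ring $R$ and is a $\Z_p[[\widetilde G^{(1)}]]$-module, so applying the claim inside $\widetilde G^{(1)}$ gives $N_2\twoheadrightarrow M_1$ over some $\widetilde G^{(2)}\subseteq\widetilde G^{(1)}$, and so on. After $i$ steps one has an exact sequence $0\to M_i\to N_i\to\cdots\to N_1\to M\to0$ with each $N_j$ free of finite rank over $R$, so $M_i$ is an $i$-th syzygy of $M$ over $R$ and is therefore $R$-projective — hence free of finite rank over the local ring $R$ — once $i\ge d$. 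Stopping there and putting $\widetilde G^0:=\widetilde G^{(d)}$ (the smallest group occurring), one restricts every term and map to $\Z_p[[\widetilde G^0]]$: exactness is unaffected, each term is finitely generated over $R\subseteq\Z_p[[\widetilde G^0]]$ hence over $\Z_p[[\widetilde G^0]]$, and $\widetilde G^0=G_1\times G^0$ identifies $\Z_p[[\widetilde G^0]]$ with $\Z_p[[X]][[G^0]]$ and gives $H\subseteq G^0$. This is the desired resolution (the terminal syzygy playing the role of $N_k$).

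To prove the claim: since $\widetilde\Gamma\cong\Z_p$, any lift $\gamma$ of a topological generator of $\widetilde\Gamma$ generates a closed procyclic subgroup of $\widetilde G$ mapping onto $\widetilde\Gamma$, hence isomorphically onto it, so $\widetilde G=\widetilde H\rtimes\overline{\langle\gamma\rangle}$. Correspondingly, for an open subgroup $\widetilde G'=\widetilde H\rtimes\overline{\langle\gamma_0\rangle}$ with $\gamma_0$ a power of $\gamma$, giving a continuous $\Z_p[[\widetilde G']]$-module structure on an $R$-module $P$ extending its $R$-structure amounts to giving a continuous automorphism $\Phi$ of $P$ that is $\sigma$-semilinear ($\sigma:=$ conjugation by $\gamma_0$ on $R$) and has $\Phi-1$ topologically nilpotent, so that power series in $\gamma_0-1$ converge; then $\gamma_0$ acts by $\Phi$, with $\gamma_0^a$ acting as $\sum_{j\ge0}\binom{a}{j}(\Phi-1)^j$ for $a\in\Z_p$. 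Now choose $R$-generators $m_1,\dots,m_n$ of $L$. The action of $\widetilde\Gamma\cong\Z_p$ on each of the finite-dimensional $\mathbb F_p$-spaces $L/\mathfrak m L$ and $\mathfrak m/\mathfrak m^2$ factors through a finite quotient, so choose $s$ with $\gamma_0:=\gamma^{p^s}$ acting trivially on both and set $\widetilde G':=\widetilde H\rtimes\overline{\langle\gamma_0\rangle}$ (open in $\widetilde G$). Write $\gamma_0\cdot m_i=\sum_j c_{ij}m_j$; triviality on $L/\mathfrak m L$ forces $C:=(c_{ij})\in I_n+\mathrm{M}_n(\mathfrak m)\subseteq\GL_n(R)$. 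Put $P:=R^n$ with basis $e_1,\dots,e_n$, $\pi(e_i)=m_i$, and let $\Phi$ be the $\sigma$-semilinear automorphism of $P$ with matrix $C$; then $\pi\circ\Phi=(\gamma_0\cdot)\circ\pi$, since this holds on the basis and both sides are $\sigma$-semilinear. The crucial point is the estimate $(\Phi-1)(\mathfrak m^kP)\subseteq\mathfrak m^{k+1}P$ for every $k\ge0$: modulo $\mathfrak m$ one has $C\equiv I_n$ and $\sigma\equiv\mathrm{id}$, so $(\Phi-1)P\subseteq\mathfrak m P$; triviality of $\gamma_0$ on $\mathfrak m/\mathfrak m^2$ together with the Leibniz rule for the ring automorphism $\sigma$ gives $(\sigma-1)(\mathfrak m^k)\subseteq\mathfrak m^{k+1}$ by induction on $k$; and combining these with $\Phi(\sum_i\lambda_ie_i)=\sum_i\sigma(\lambda_i)\Phi(e_i)$ yields the estimate. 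Consequently $(\Phi-1)^kP\subseteq\mathfrak m^kP$, so $\Phi-1$ is topologically nilpotent, $\Phi$ equips $P$ with a continuous $\Z_p[[\widetilde G']]$-module structure, and $\pi$ is then a surjection of $\Z_p[[\widetilde G']]$-modules — which proves the claim.

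The main obstacle, I expect, is precisely the last point of the claim. A $\gamma$-semilinear automorphism of $P$ lifting the $\gamma$-action on $L$ is easy to write down, but making it assemble into a genuine $\Z_p$-action — equivalently, making $P$ a module over the completed ring $\Z_p[[\widetilde G']]$ and not merely over the group ring of $\widetilde H\rtimes\Z$ — requires $\Phi-1$ to be topologically nilpotent, which fails over $\widetilde\Gamma$ itself and is exactly what forces the passage to $\overline{\langle\gamma^{p^s}\rangle}$ and the two finite-quotient trivializations (on $L/\mathfrak m L$ and on $\mathfrak m/\mathfrak m^2$). Everything else — noetherianity and finiteness of the global dimension of $R$, freeness of finitely generated projectives over the local ring $R$, the skew-power-series description of $\Z_p[[\widetilde G']]$, and the product form of open subgroups from \cref{lem:1.8} — is routine.
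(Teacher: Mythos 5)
Your argument is correct and follows essentially the same route as the paper: identify $\Z_p[[X]][[G]]$ with $\Z_p[[G_1\times G]]$, construct the resolution for the pair $(G_1\times G,\,G_1\times H)$ whose quotient is again $\cong\Z_p$, and recover the product form $G^{00}=G_1\times G^0$ of the open subgroup via \cref{lem:1.8}. The only difference is that the paper obtains the resolution by citing \cite[Lemma 2.4]{ss} as a black box, whereas your semilinear-lifting claim (trivializing the $\gamma$-action on $L/\mathfrak m L$ and $\mathfrak m/\mathfrak m^2$ to make $\Phi-1$ topologically nilpotent) together with the iteration using the finite global dimension of $\Z_p[[G_1\times H]]$ amounts to a correct self-contained proof of that cited lemma.
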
 
\begin{proof}
	Note that $\Z_p[[X]][[G]]\cong \Z_p[[G_1 \times G]]$, where $G_1\cong \Z_p$. Also $\frac{G_1\times G}{G_1\times H} \cong \Gamma$. Using \cite[Lemma 2.4]{ss}, {{} there  exists} an open subgroup $G^{00}$ of $G_1\times G$ such that $G_1\times H \subset G^{00}$ and  a resolution
	$$0\rightarrow N_k \rightarrow N_{k-1}\rightarrow \cdots \rightarrow N_1 \rightarrow M \rightarrow 0$$ of $M$ by finitely generated $\Z_p[[G^{00}]]$ modules $N_i$, $1 \leq i \leq k$ such that each $N_i$ is a free $\Z_p[[G_1\times H]]$ module of finite rank. By Lemma \ref{lem:1.8}, $G^{00}= G_1\times G^{0}$, where $G^0$ is an open subgroup of $G$ containing $H$.
\end{proof}	
{{} 
\begin{remark}
	Let us again consider  the false Tate curve extension given by $K =\Q(\mu_p)$,  $K_\infty =\underset{n}{\cup}\Q(\mu_{p^\infty})( m^{1/p^n})$ and $G=\mathrm{Gal}(K_\infty/K) \cong  \Z_p\rtimes \Z_p$. Then it  follows from a result of Z\'abr\'adi \cite[Lemma 4.3]{z}, that for this particular $G$, one can, in fact take $G^0=G$ in \cite[Lemma 2.4]{ss} and hence we can also assume  $G^0$ to be equal to $G$ in Lemma \ref{le1.4}. 
\end{remark}
}
Next we calculate the Euler characteristic of a free $\Z_p[[X]][[H]] $ module. Recall, for any $Q\in C^{\Z_p[[X]]}$, $\frac{\Z_p[[X]]}{Q}\cong O_Q$, the ring of integers of certain finite extension of $\Q_p$.
\begin{proposition} \label{p1.7}
	Let $G$ be a compact, pro-$p$, $p$-adic Lie group without any $p$-torsion element. Let $H$ be a closed normal subgroup of $G$ with $\Gamma:=G/H \cong \Z_p$. 
	Let $N$ be a finitely generated $\Z_p[[X]][[G]]$ module which is also a finitely generated free $\Z_p[[X]][[H]] $ module of rank $d$. Then there exists  a countable subset $S_N$ of $S$ and for any  $\theta \in S\setminus S_N$ there exists a countable subset $C_{N, \theta}$ of $C$ such that the following holds:  For any  $\theta \in S\setminus S_N$ and for any $Q\in C\setminus C_{N, \theta}$, the $U$-Euler characteristic $\chi\left(U,\frac{N}{QN}(\theta)\right)$ exists for every open normal subgroup $U$ of $G$. Moreover, $\chi\big(U,\frac{N}{QN}(\theta)\big)=\# \big(\frac{N} {QN}(\theta)\big)_U$.
\end{proposition}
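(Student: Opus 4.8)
The plan is to exploit the freeness of $N$ over $\Z_p[[X]][[H]]$ in order to reduce the assertion on $\chi\big(U,\frac{N}{QN}(\theta)\big)$ to the mere finiteness of $H_0\big(U,\frac{N}{QN}(\theta)\big)$, which is precisely the output of Proposition \ref{th1.1}.

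First I would apply Proposition \ref{th1.1} to the module $N$ itself (it is finitely generated over $\Z_p[[X]][[G]]$, and finitely generated over $\Z_p[[X]][[H]]$ since it is even free of finite rank there). This produces the countable set $S_N\subseteq S$ and, for each $\theta\in S\setminus S_N$, the countable set $C_{N,\theta}\subseteq C$, with the property that $\#\big(\frac{N}{QN}(\theta)\big)_U$ is finite for every open normal subgroup $U$ of $G$ whenever $\theta\in S\setminus S_N$ and $Q\in C\setminus C_{N,\theta}$. I claim these very sets work. So fix such $\theta$ and $Q$, write $W:=\frac{N}{QN}(\theta)$, and keep in mind that $W_U$ is finite for every open normal $U\leq G$.

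The structural point I would record next is that $W$ is a \emph{free} $O_Q[[H]]$-module of finite rank: reduction modulo the prime $Q$ of $\Z_p[[X]]$ identifies $\frac{N}{QN}$ with a free module of rank $d$ over $\frac{\Z_p[[X]]}{Q}[[H]]\cong O_Q[[H]]$, and since $\theta$ factors through $\Gamma=G/H$ the subgroup $H$ acts trivially on $\Z_p(\theta)$, so $W\cong\frac{N}{QN}$ as $O_Q[[H]]$-modules. Now fix an open normal subgroup $U\leq G$ and put $H_U:=U\cap H$; this is an open (hence finite-index) subgroup of $H$, normal in $U$, and $\bar U:=U/H_U\cong UH/H$ is an open subgroup of $\Gamma\cong\Z_p$, so $\bar U\cong\Z_p$. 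As $O_Q[[H]]$ is free of rank $[H:H_U]$ over $O_Q[[H_U]]$, the module $W$ is free of finite rank over $O_Q[[H_U]]$; hence $H_q(H_U,W)=0$ for $q\geq 1$ and $W_{H_U}$ is a free $O_Q$-module of finite rank. Feeding this into the Hochschild--Serre spectral sequence $H_p(\bar U,H_q(H_U,W))\Rightarrow H_{p+q}(U,W)$ attached to $1\to H_U\to U\to\bar U\to 1$, the sequence is concentrated in the row $q=0$, so it degenerates and gives natural isomorphisms $H_i(U,W)\cong H_i(\bar U,W_{H_U})$ for every $i$.

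Finally I would carry out the computation over $\bar U\cong\Z_p$. Since $\Z_p$ has cohomological dimension $1$, $H_i(\bar U,W_{H_U})=0$ for $i\geq 2$, while for a topological generator $\gamma$ of $\bar U$ one has $H_0(\bar U,W_{H_U})=\coker(\gamma-1\colon W_{H_U}\to W_{H_U})$ and $H_1(\bar U,W_{H_U})=\Ker(\gamma-1\colon W_{H_U}\to W_{H_U})$. By the case $i=0$ of the isomorphism above, $H_0(\bar U,W_{H_U})\cong W_U$ is finite; but an $O_Q$-linear endomorphism of a nonzero free module of finite rank over the domain $O_Q$ with finite cokernel has nonzero determinant, hence is injective, so $\gamma-1$ is injective on $W_{H_U}$, whence $H_1(\bar U,W_{H_U})=0$ and $H_i(U,W)=0$ for all $i\geq 1$. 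Consequently $\chi(U,W)$ exists and equals $\#H_0(U,W)=\#W_U=\#\big(\frac{N}{QN}(\theta)\big)_U$, as claimed. The only points that demand care are the bookkeeping ones — that $H$-freeness of $N$ is preserved under reduction modulo $Q$, twisting by $\theta$, and restriction to $H_U$ — since it is exactly this that forces the spectral sequence to collapse onto a single row; everything else is routine homological algebra over $\Z_p$.
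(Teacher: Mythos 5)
Your proof is correct and follows essentially the same route as the paper: the paper likewise first invokes Proposition \ref{th1.1} to obtain $S_N$ and $C_{N,\theta}$ with $H_0\big(U,\frac{N}{QN}(\theta)\big)$ finite, and then concludes from the $O_Q[[H]]$-freeness of $\frac{N}{QN}$ by citing \cite[Proposition 2.7]{ss}. The only difference is that you prove that cited step inline (collapse of Hochschild--Serre onto the $q=0$ row plus the determinant argument over $O_Q$), which is exactly the content of the quoted result.
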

\proof By Proposition \ref{th1.1}, {{} there  exists}  a countable subset $S_N$ of $S$ and  for any  $\theta \in S\setminus S_N$ there exists a countable subset $C_{N, \theta}$ of $C$ such that the following holds: For any  $\theta \in S\setminus S_N$ and for any $Q\in C\setminus C_{N, \theta}$,  $H_0\left(U,\frac{N}{QN}(\theta)\right)$ is finite for every open normal subgroup $U$ of $G$.  Now as $\frac {N}{QN}$ is a free $O_Q[[H]]$ module of finite rank, the result follows directly from \cite[Proposition 2.7]{ss}. \qed

\begin{remark}\label{lastint}
Let us keep the setting and hypotheses of Lemma \ref{le1.4}. Then by Lemma \ref{le1.4}, there exists  an open normal subgroup $H \subset G^0 \subset G$  a resolution of $M$ by $\Lambda(G^0)$-module, 
\begin{equation} \label{eq6}
0\longrightarrow N_k\overset{f_k} \longrightarrow N_{k-1}\overset{f_{k-1}}\longrightarrow \cdots\overset{f_2}\longrightarrow N_1\overset{f_1} \longrightarrow M \rightarrow 0
\end{equation}
such that each $N_i$ is a finitely generated, free $\Z_p[[X]][[H]]$-module. Let $U$ be an open normal subgroup of $G^0$. Take $Q=(X+1)-\zeta (1+p)^k \in A_{\text{arith}} (\Z_p[[X]])$. By Corollary \ref{col11}, we can choose a $\theta \in S$ such that for every $Q \in A_{\text{arith}} (\Z_p[[X]])$, $\chi(U,\frac{N_i}{QN_i}(\theta))$, {{}$1\leq i \leq k$} and $\chi(U,\frac{M}{QM}(\theta))$ are all finite. An easy computation shows that for every $Q \in A_{\text{arith}} (\Z_p[[X]])$, the $U$-Euler characteristics  $M(\theta)[Q]$ also exists and  $\chi(U,\frac{M}{QM}(\theta))/\chi(U,M(\theta)[Q]) = \prod_i \big(\chi(U,\frac{N_i}{QN_i}(\theta))\big)^{(-1)^{i+1}}$.

 We also have $\chi(U,\frac{N_i}{QN_i}(\theta))=\#\big(\frac{N_i}{QN_i}(\theta)\big)_U=\# \frac{\big(N_i(\theta)\big)_U}{Q\big(N_i(\theta)\big)_U}$, for every $Q \in A_{\text{arith}} (\Z_p[[X]])$. 
Let $f^\theta_i(X)\in \Z_p[[X]]$ denote the $\Z_p[[X]]$ characteristic element of  $\big(N_i(\theta)\big)_U$. Since $\frac{\big(N_i(\theta)\big)_U}{Q\big(N_i(\theta)\big)_U}$ is finite for every $i$, $Q$ does not divide $f^\theta_i(X)$ and we have $\chi(U,\frac{N_i}{QN_i}(\theta))=f_i^\theta(Q):=f_i^\theta(\zeta (1+p)^k-1)=\#\frac{\Z_p[[X]]}{(Q,f^\theta_i)}$, for every $Q \in A_{\text{arith}} (\Z_p[[X]])$. Thus 
\begin{equation}\label{lasteqn}
\frac{\chi(U,\frac{M}{QM}(\theta))}{\chi(U,M(\theta)[Q])} = \prod_i \Big({\#\frac{\Z_p[[X]]}{(Q,f^\theta_i(X))}\Big)^{(-1)^{i+1}}}.
\end{equation}

Now in this setting, if for some $Q$, $M[Q]=0$, then using \eqref{lasteqn} we can compute $\chi(U,\frac{M}{QM}(\theta))$. On the other hand, it is also natural to ask if $\chi(U,\frac{M}{{{} Q_0}M}(\theta))=\chi(U,M(\theta)[{{} Q_0}])$ for some {{} $Q_0\in  A_{\text{arith}} (\Z_p[[X]])$}, then can we say $\chi(U,\frac{M}{QM}(\theta))=\chi(U,M(\theta)[Q])$ for  every $Q \in A_{\text{arith}} (\Z_p[[X]])$?  Unfortunately, we do not  know the answer of this. The reason being,  we do not know if $\prod_i \big(f_i^\theta(X))^{(-1)^{i+1}} \in \Z_p[[X]]$ or not.
\end{remark}

\begin{remark}\label{sfinal-remark}
Let $E/K$ be an elliptic curve with good, ordinary reduction at primes of $K$ dividing $p$. Let $p$-cohomological dimension $cd_p(G)$ of $G=\text{Gal}(K_\infty/K)$ be $\geq 3$. Also assume (i) for {{} any} prime ${{}v}$ of $K$ dividing $p$,  $cd_p(G_{{}v}) < cd_p(G)$ and (ii) for  {{} any} prime $u$ of $K$ not dividing $p$, such that either $u$ ramifies in $K_\infty$ or $u$ is a bad prime of $E$,  $cd_p(G_u) \geq 2 $. Further assume, $X(E/K_\cyc)$ is a finitely generated $\Z_p$ module. Then by  \cite[Proposition 5.2]{ov}, the $\Z_p[[G]]$ projective dimension of $X(E/K_\infty)$ is  $= cd_p(G) -1$ if $E(K)[p]\neq 0$ and $ = cd_p(G) -2$ if $E(K)[p]= 0$. Now, as $X(E/K_\cyc)$ is a finitely generated $\Z_p$ module, applying a control theorem, we can deduce  $X(E/K_\infty)$ is a finitely generated $\Z_p[[H]]$ module. Since   $cd_p(H) = cd_p(G) -1$, $\Z_p[[H]]$ projective dimension of $X(E/K_\infty)$ is  $ cd_p(H) -1$ if $E(K)[p]\neq 0$ and $ =cd_p(H) -2$ if $E(K)[p]= 0$. In particular, if  $cd_p(H) \geq 3 $, then $X(E/K_\infty)$ cannot be a free $\Z_p[[H]]$ module. Nevertheless, {{} there  exists} an open subgroup $G^0$ of $G$ containing $H$ and  a $\Z_p[[G^0]]$ resolution of $X(E/K_\infty)$  of length $k$ by finitely generated $\Z_p[[H]]$ modules \cite[Lemma 2.4]{ss}. Moreover,  the length of the resolution is given by $k= cd_p(H) -1$ if $E(K)[p]\neq 0$ and $k= cd_p(H) -2$ if $E(K)[p]=0$. A similar assertion holds for the `big' Selmer  group $\mathcal X(\T_\F/K_\infty)$ by using Lemma \ref{le1.4}.
\end{remark}

\medskip

Now we are ready to prove Theorem \ref{thm-main}.

\medskip

{\it Proof of Theorem \ref{thm-main}:} First of all, using Corollary \ref{hidaredcor}, we can assume without any loss of generality, that $\T=\Z_p[[X]]$. Next, by Remark \ref{rem101}, we can assume $M$ is a finitely generated $\Z_p[[X]][[H]]$ module. Further, as $\Z_p[[X]]$ is regular, we can assume using Corollary \ref{col1} that $M$ is $\T$-torsion free. Moreover, following Remark \ref{rem102}, we will assume that $G$ is a compact, pro-$p$, $p$-adic Lie group without any element of order $p$.

By Lemma \ref{le1.4}, there exists an open normal subgroup $G^0$ of $G$ with $H \subset G^0$ and a resolution 
\begin{equation} \label{eq2}
0\longrightarrow N_k\overset{f_k} \longrightarrow N_{k-1}\overset{f_{k-1}}\longrightarrow \cdots\overset{f_2}\longrightarrow N_1\overset{f_1} \longrightarrow M \rightarrow 0
\end{equation}
of $M$ by finitely generated $\Z_p[[X]][[G^0]]$ modules $N_i$, $1\leq i \leq k$ such that each $N_i$ is a free $\Z_p[[H]]$ module of finite rank. 

\par Set $\Gamma^0:= G^0/H$. Then as explained in the proof of \cite[Theorem 1.2]{ss}, for any given $\theta \in S, \ Q\in C$ and any open normal subgroup $U$ of $G$, if $\chi(U\cap G^0, \frac{M}{QM}(\theta|_{\Gamma^0}))$ is finite, then we can deduce $\chi(U, \frac{M}{QM}(\theta))$ is also finite.  Thus, for the rest of the proof, we will only discuss the finiteness of $\chi(U\cap G^0, \frac{M}{QM}(\theta|_{\Gamma^0}))$; and further, to ease the burden of notation, will write $G^0=G$ and $\Gamma^0=\Gamma$ for the rest of the proof.

  We will proceed by induction on $k$ in \eqref{eq2}. For $k=1$, $M/QM$ is a free $\frac{\Z_p[[X]]}{Q}[[H]]\cong O_Q[[H]]$ module  of finite rank, for any $Q\in C$. Hence by Proposition \ref{p1.7}, there exists a countable subset $S_M$ of $S$ and a countable subset $C_{M, \theta}$ of $C$, such that the following holds: For any  $\theta \in S\setminus S_M$ and for any $Q\in C\setminus C_{M, \theta}$,  $\chi\left(U,\frac{N}{QN}(\theta)\right)$ exists for every open normal subgroup $U$ of $G$. 
  
 Next, pick any $Q \in C$. Then $\Z_p[[X]]$ being a regular local ring, we get $Q=(q)$ and hence $M[q] =0$ as $M$ is $\T$-torsion free. Thus \eqref{eq2} gives rise to another exact sequence of ${O_Q}[[G]]$ modules
\begin{equation} \label{eq23}
0\lra\text{Img}({f_2})/{Q\text{Img}({f_2})}\longrightarrow N_1/{QN_1} \overset{f_1}\longrightarrow M/{QM} \rightarrow 0.
\end{equation}
 
 By induction, there exists a countable subset $S_2$ of $S$ and a countable subset $C_{2, \theta}$ of $C$, such that the following holds: For any  $\theta \in S\setminus S_2$ and for any $Q\in C\setminus C_{2, \theta}$,  $\chi\left(U,\frac{Img(f_2)}{QImg(f_2)}(\theta)\right)$ exists for every open normal subgroup $U$ of $G$. Similarly,  $N_1/{QN_1}$ is a free $O_Q[[H]]$ module  of finite rank and hence  there exists a countable subset $S_1$ of $S$ and a countable subset $C_{1, \theta}$ of $C$, such that the following holds: For any  $\theta \in S\setminus S_1$ and for any $Q\in C\setminus C_{1, \theta}$,  $\chi\left(U,\frac{N_1}{QN_1}(\theta)\right)$ exists for every open normal subgroup $U$ of $G$. Define $S_M: = S_1 \cup S_2$, and for any $\theta \in S\setminus S_M$, set $C_{M,\theta}=C_{1, \theta} \cup C_{2, \theta}$. Then from \eqref{eq23}, for any $\theta \in S\setminus S_M$ and any $Q\in C \setminus C_{M, \theta}$, $\chi\big(U, \frac{M}{QM}(\theta)\big)$ is finite for every $U$. This completes the proof of Theorem \ref{thm-main}. \qed

\section*{Acknowledgement} \begin{small}{S. Jha  acknowledges the support of SERB MATRICS grant and SERB ECR grant. S. Shekhar is supported by  DST INSPIRE faculty award grant. We thank Tadashi Ochiai for discussions. We thank the referee for her/his valuable comments and suggestions, which helped us in improving the article.}\end{small}

\end{document}